\theoremstyle{theorem}
\newtheorem{thm}{Theorem}[section]
\newtheorem{prop}[thm]{Proposition}
\newtheorem{cor}[thm]{Corollary}
\newtheorem*{thm*}{Theorem}
\newtheorem*{prop*}{Proposition}
\newtheorem*{lem*}{Lemma}
\newtheorem*{cor*}{Corollary}
\newtheorem{mainthm}{Theorem}
\newtheorem{maincor}[mainthm]{Corollary}
\theoremstyle{remark}
\newtheorem{remi}[thm]{Remark}
\newtheorem{ex}[thm]{Example}
\newcommand{\g}{\mathfrak g}
\newcommand{\pM}{\pi_{M}^{*}}
\newcommand{\pN}{\pi_{N}^{*}}
\begin{document}

\title{The Euler characteristic of a transitive Lie algebroid}
\author{James Waldron}

\maketitle

\begin{abstract}
We apply the Atiyah-Singer index theorem and tensor products of elliptic complexes to the cohomology of transitive Lie algebroids. We prove that the Euler characteristic of a representation of a transitive Lie algebroid $A$ over a compact manifold $M$ vanishes unless $A=TM$, and prove a general K\"{u}nneth formula. As applications we give a short proof of a vanishing result for the Euler characteristic of a principal bundle calculated using invariant differential forms, and show that the cohomology of certain Lie algebroids are exterior algebras. The latter result can be seen as a generalization of Hopf's theorem regarding the cohomology of compact Lie groups.

\end{abstract}

\section{Introduction}

\subsection{Euler characteristics of Lie algebras}

Let $\g$ be a finite dimensional Lie algebra. The Lie algebra cohomology $H^{\bullet}\left(\g\right)$ is a finite dimensional graded vector space concentrated in degrees $0\le p\le \mathrm{dim}\;\g$. This permits one to define the \emph{Euler characteristic} of $\g$ as the alternating sum
\[
\chi\left(\g\right) := \sum_{p=0}^{\mathrm{dim}\;\g} \left(-1\right)^{p} \mathrm{dim}\;H^{p}\left(\g\right).
\]
The motivation for this paper is the following Theorem and its proof.
\begin{thm}
\label{thm: Goldberg}
\emph{(Goldberg 1955,\cite{Goldberg55}).}
If $\g$ is a non-zero finite dimensional Lie algebra then $\chi\left(\g\right)=0$.
\end{thm}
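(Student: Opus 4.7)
The plan is to reduce the statement to a trivial binomial identity by invoking the Chevalley--Eilenberg resolution and the Euler--Poincar\'e principle. The point is that $H^{\bullet}(\mathfrak{g})$ is computed as the cohomology of the finite-dimensional bounded complex $(\Lambda^{\bullet}\mathfrak{g}^{*}, d_{\mathrm{CE}})$, so the Euler characteristic can be read off from the underlying graded vector space, \emph{without} any control on the individual cohomology groups.

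First I would recall that for any bounded cochain complex $(C^{\bullet}, d)$ of finite-dimensional vector spaces, one has the Euler--Poincar\'e identity
\[
\sum_{p} (-1)^{p} \dim H^{p}(C^{\bullet}) \; = \; \sum_{p} (-1)^{p} \dim C^{p},
\]
proved by splitting each $C^{p}$ as $B^{p} \oplus H^{p} \oplus (C^{p}/Z^{p})$ and observing that the boundary and coboundary contributions cancel in pairs.

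Next I would apply this to $C^{p} = \Lambda^{p}\mathfrak{g}^{*}$, so that $\dim C^{p} = \binom{n}{p}$ where $n = \dim \mathfrak{g}$. The binomial theorem then gives
\[
\chi(\mathfrak{g}) \; = \; \sum_{p=0}^{n} (-1)^{p} \binom{n}{p} \; = \; (1-1)^{n} \; = \; 0,
\]
using the hypothesis $n \geq 1$ at the last step. The main (and only) subtle point is that no finer information about $H^{\bullet}(\mathfrak{g})$ is needed; the vanishing is purely a consequence of the shape of the Chevalley--Eilenberg complex, which is why the statement generalizes so cleanly and motivates the Lie algebroid version studied in the rest of the paper.
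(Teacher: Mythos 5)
Your proposal is correct and is essentially the same argument the paper attributes to Goldberg: apply the Euler--Poincar\'e principle to the Chevalley--Eilenberg complex $\wedge^{\bullet}\mathfrak{g}^{*}$, so that $\chi(\mathfrak{g})=\sum_{p}(-1)^{p}\binom{n}{p}=(1-1)^{n}=0$ for $n\ge 1$, with no information about the differential needed. Nothing is missing.
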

Note that if $\g=0$ then $\chi\left(\g\right)=1$. This result has an interesting history, having been proven earlier by Chevalley \& Eilenberg \cite{ChevalleyE48} for the classical Lie algebras using results on the structure of simple Lie groups; see \cite{Zusmanovich11} for a discussion. The proof given in \cite{Goldberg55} is purely algebraic and works over any field: one applies the `Euler-Poincar\'{e} principle' to the Chevalley-Eilenberg complex $\wedge^{\bullet}\g^{*}$ and uses the fact that the alternating sum of the binomial coefficients vanishes. In particular, the proof does not involve the differential but only the vector spaces appearing in the complex. 

For an action of a Lie group $G$ on a manifold $M$ we denote by $H_{\mathrm{dR},G}^{\bullet}\left(M\right)$ the cohomology of the complex of $G$-invariant differential forms. Following \cite{TangYZ13}, if the cohomology groups $H_{\mathrm{dR},G}^{p}\left(M\right)$ are finite dimensional then we define the \emph{Euler characteristic of the $G$-action on $M$} by
\[
\chi\left(M,G\right) := \sum_{p=0}^{\mathrm{dim}\;M}\mathrm{dim}\left(-1\right)^{p}\;H_{\mathrm{dR},G}^{p}\left(M\right)
\]
and denote the standard Euler characteristic by $\chi\left(M\right)$.
If $G$ is a Lie group with Lie algebra $\g$ then Theorem \ref{thm: Goldberg} and the isomorphism $H_{\mathrm{dR},G}^{\bullet}\left(G\right) \cong H^{\bullet}\left(\g\right)$ proves the following Corollary.
\begin{cor}
\label{cor: Goldberg}
Let $G$ be a positive dimensional Lie group acting on itself via the right action.
\begin{enumerate}
\item $\chi\left(G,G\right)=0$.
\item $\chi\left(G\right) = 0$ if $G$ is compact. 
\end{enumerate}
\end{cor}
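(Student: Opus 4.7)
The plan is to derive both statements as direct consequences of Theorem \ref{thm: Goldberg} together with the isomorphism $H_{\mathrm{dR},G}^{\bullet}(G) \cong H^{\bullet}(\mathfrak{g})$ recalled just before the statement.

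For (1), I would simply take alternating sums of dimensions across that isomorphism,
\[
\chi(G,G) = \sum_{p} (-1)^{p} \dim H_{\mathrm{dR},G}^{p}(G) = \sum_{p} (-1)^{p} \dim H^{p}(\mathfrak{g}) = \chi(\mathfrak{g}),
\]
and observe that $\dim G > 0$ forces $\mathfrak{g}$ to be a nonzero finite-dimensional Lie algebra, so Theorem \ref{thm: Goldberg} yields $\chi(\mathfrak{g}) = 0$.

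For (2), the plan is to identify $\chi(G)$ with $\chi(G,G)$ and then invoke (1). When $G$ is compact and connected, integration against normalized Haar measure gives a chain map $\Omega^{\bullet}(G) \to \Omega^{\bullet}(G)^{G}$ splitting the inclusion of right-invariant forms; because every right translation $R_{g}$ is smoothly isotopic to the identity through the path-connected group, its induced action on $H_{\mathrm{dR}}^{\bullet}(G)$ is trivial, so this averaging projection acts as the identity on cohomology and the inclusion is a quasi-isomorphism. Hence $\chi(G) = \chi(G,G) = 0$ by part (1). If $G$ is compact but disconnected, the identity component $G_{0}$ is still positive-dimensional, and the diffeomorphism $G \cong G_{0} \times \pi_{0}(G)$ gives $\chi(G) = |\pi_{0}(G)| \cdot \chi(G_{0}) = 0$, reducing to the connected case.

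The only serious ingredient is Theorem \ref{thm: Goldberg}, which is used as a black box; everything else is formal. No substantial obstacle remains: the step requiring the most care is the averaging quasi-isomorphism in the connected case, but this is a standard textbook fact about actions of compact connected Lie groups on their own de Rham complex.
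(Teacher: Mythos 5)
Your proof is correct and takes essentially the same route as the paper: part (1) is immediate from Theorem \ref{thm: Goldberg} via the isomorphism $H_{\mathrm{dR},G}^{\bullet}(G)\cong H^{\bullet}(\mathfrak{g})$, and part (2) reduces to (1) by the standard fact---which the paper simply cites---that for compact $G$ the inclusion of invariant forms is a quasi-isomorphism. Your explicit Haar-averaging/isotopy argument and the reduction of the disconnected case to the identity component merely fill in details the paper delegates to a reference.
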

The second statement is well known and is usually proven using topological arguments, e.g.~ via the Lefschetz trace formula, the Poincar\'{e}-Hopf index theorem, or the vanishing of the Euler class of a parallelizable manifold. Theorem \ref{thm: Goldberg} can be seen as providing a purely algebraic explanation of this fact.

\subsection{Transitive Lie algebroids}
\label{sec: TLA}

Our first main result is a generalization of Theorem \ref{thm: Goldberg} and its proof to the case of \emph{transitive Lie algebroids}, and of Corollary \ref{cor: Goldberg} to principal bundles.  A transitive Lie algebroid over a smooth manifold $M$ is a smooth vector bundle $A$ over $M$ equipped with a surjective vector bundle morphism $a:A \to TM$ and a Lie bracket on $\Gamma\left(A\right)$ satisfying an analogue of the Leibniz rule. Standard examples include finite dimensional real or complex Lie algebras ($M=\mathrm{pt}$), the tangent bundle $TM$, and the \emph{Atiyah algebroid} $TP/G$ of a principal $G$-bundle $P\to M$ for $G$ a Lie group. There is a notion of \emph{representation} of a Lie algebroid on a vector bundle $E$, to which there are associated cohomology groups $H^{p}\left(A,E\right)$. See section \ref{sec: bg} for the precise definitions.

\subsection{Main results}

If $E$ is a representation of a transitive Lie algebroid $A$ and the cohomology groups $H^{p}\left(A,E\right)$ are finite dimensional then we define the \emph{Euler characteristic} of $E$ as 
\[
\chi\left(A,E\right) := \sum_{p=0}^{\mathrm{rank}\;A}\left(-1\right)^{p}\mathrm{dim}\; H^{p}\left(A,E\right).
\]
We write $H^{p}\left(A\right)$ and  $\chi\left(A\right)$ for $E=M\times \mathbb{R}$ the standard representation.
\begin{mainthm}
\label{thm: euler}

Let $A$ be a real or complex transitive Lie algebroid over a connected compact manifold $M$, $L=\mathrm{Ker}\;a$, and $E$ a representation of $A$. 
\[
\chi\left(A,E\right) = \begin{cases}  \mathrm{rank}\;E \cdot \chi\left(M\right) & \text{if } \; L=0 \\
0 &  \text{otherwise. } 
\end{cases}
\]
\end{mainthm}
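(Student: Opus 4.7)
The plan is to identify $\chi(A,E)$ with the index of an elliptic complex on $M$ and compute it topologically via the Atiyah-Singer index theorem.

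First I would verify that the Lie algebroid cohomology complex $(\Omega^\bullet_{A,E}, d_A)$ is elliptic when $A$ is transitive. Writing out $d_A$ on a locally-defined $f\cdot \alpha$ shows that its principal symbol at a covector $\xi \in T_x^*M$ is exterior multiplication by $a^*(\xi) \in A_x^*$. Since $a$ is surjective, $a^*(\xi)$ is nonzero whenever $\xi\neq 0$, so the resulting Koszul-type complex on $\wedge^\bullet A_x^* \otimes E_x$ is exact. Ellipticity over compact $M$ then guarantees that each $H^p(A,E)$ is finite dimensional and that $\chi(A,E)$ agrees with the Atiyah-Singer index, which depends only on the symbol class in $K(T^*M)$.

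Next I would factor this symbol class. A linear splitting of the anchor sequence $0 \to L \to A \to TM \to 0$ identifies $A^* \cong T^*M \oplus L^*$ and $\wedge^\bullet A^* \cong \wedge^\bullet T^*M \otimes \wedge^\bullet L^*$ as graded vector bundles; under this identification $a^*(\xi)$ lies in the $T^*M$ summand, so $a^*(\xi)\wedge$ acts only on the first tensor factor as $\xi\wedge$. Hence in $K(T^*M)$ the symbol class of $d_A$ factors as the de Rham symbol class times the pullback of the virtual graded bundle $\sum_i (-1)^i [\wedge^i L^* \otimes E]$, and the index theorem gives
\[
\chi(A,E) \;=\; \int_M e(TM) \cdot \mathrm{ch}(E) \cdot \sum_i (-1)^i \mathrm{ch}(\wedge^i L^*).
\]
Since $e(TM)$ already has top degree on $M$, only the degree-zero parts of the remaining factors contribute. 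These are $\mathrm{rank}\,E$ and $\sum_i (-1)^i \binom{\mathrm{rank}\,L}{i} = (1-1)^{\mathrm{rank}\,L}$, which equals $1$ when $L=0$ and $0$ otherwise. Combined with $\chi(M) = \int_M e(TM)$ this yields the stated dichotomy.

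The main obstacle I anticipate is rigorously justifying the symbol factorization: although $d_A$ contains algebraic and curvature-type pieces that are not pure tensors with respect to the splitting $A \cong TM \oplus L$, one must check that these lower-order contributions disappear at the symbol level so that the K-theory class really decomposes as claimed. Once this is in place, the final binomial identity $(1-1)^r = 0$ for $r\geq 1$, which is exactly the algebraic core of Goldberg's original proof, reappears here as the degree-zero piece of a Chern character, giving a clean topological explanation of the vanishing.
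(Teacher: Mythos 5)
Your proposal follows essentially the same route as the paper: ellipticity via the Koszul symbol $\wedge\, a^{*}(\xi)$, a splitting of the anchor sequence to factor the symbol class as the de Rham symbol times $\sum_{p}(-1)^{p}\left[E\otimes \wedge^{p}L^{*}\right]$, and the observation that the top-degree Euler class kills everything but the ranks, leaving the binomial identity $(1-1)^{\mathrm{rank}\,L}$. The only step you elide is the paper's preliminary reduction to the case where $M$ is oriented and even-dimensional and $A$, $E$ are complex (via the orientation double cover, complexification, and the odd-dimensional vanishing), which is needed before the cohomological index formula $\int_{M}e(TM)\cdot\mathrm{ch}(E)\cdots$ makes sense.
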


The proof of Theorem \ref{thm: euler} uses the cohomological form of the Atiyah-Singer index theorem \cite{AtiyahS68b} applied to the elliptic complex $\Gamma\left(E\otimes \wedge^{\bullet}A^{*}\right)$ computing $H^{\bullet}\left(A,E\right)$. We show that the integrand in the index theorem is equal to the Euler class of $M$ multiplied by the integer 
\[
 \left(\sum_{p=0}^{\mathrm{rank}\;L}\left(-1\right)^{p} \mathrm{rank}\;\wedge^{p}L^{*}\right) \mathrm{rank}\;E,
\]
which vanishes whenever $L\ne 0$ for the same reason as in the proof of Theorem \ref{thm: Goldberg}. Specialising to the case $M=\mathrm{pt}$ recovers Goldberg's theorem, and to the case $A=TM$ the computation of the Euler characteristic of a local system.

\begin{maincor}
\label{cor: euler}
Let $G$ be a positive dimensional Lie group and $P$ a principal $G$-bundle over a compact manifold $M$.
\begin{enumerate}
\item The cohomology groups $H_{\mathrm{dR},G}^{p}\left(P\right)$ are finite dimensional.
\item $\chi\left(P,G\right)=0$.
\item $\chi\left(P\right)=0$ if $G$ is compact.
\end{enumerate}
The same results hold if $\Omega^{\bullet}\left(P\right)^{G}$ is replaced by $\Omega^{\bullet}\left(P,V\right)^{G}$ for $V$ a non-zero finite dimensional real or complex representation of $G$.
\end{maincor}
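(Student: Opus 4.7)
The plan is to apply Theorem 1 to the \emph{Atiyah algebroid} $A = TP/G$ of the principal bundle $P \to M$. Recall that $A$ is a transitive Lie algebroid over $M$ whose anchor is induced by $dp: TP \to TM$ (where $p: P \to M$ is the projection) and whose kernel is the adjoint bundle $L = P \times_G \g$. Since $G$ is positive dimensional, $L \ne 0$, placing us in the vanishing case of Theorem 1.

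First I would identify the complex $\Omega^{\bullet}(P, V)^{G}$ of $G$-invariant $V$-valued forms on $P$ with the Chevalley-Eilenberg complex $\Gamma(E \otimes \wedge^{\bullet} A^{*})$ of $A$ with coefficients in the associated bundle $E = P \times_G V$. Concretely, a $G$-invariant form descends to a section of $E \otimes \wedge^{\bullet} A^{*}$ under the quotient $TP \to A$, and this correspondence intertwines the de Rham and Lie algebroid differentials; the case $V = \mathbb{R}$ recovers $\Omega^{\bullet}(P)^{G} \cong \Gamma(\wedge^{\bullet} A^{*})$. Granting this, statement (1) follows because the Chevalley-Eilenberg complex of a transitive Lie algebroid is elliptic and $M$ is compact, and statement (2) is immediate from Theorem 1 since $\mathrm{rank}\,E = \dim V \geq 1$ while $L \ne 0$.

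For statement (3) in the case $V = \mathbb{R}$, I would use the standard averaging argument: when $G$ is compact, integration over $G$ using Haar measure shows that the inclusion $\Omega^{\bullet}(P)^{G} \hookrightarrow \Omega^{\bullet}(P)$ is a quasi-isomorphism, so $\chi(P) = \chi(P, G)$, which vanishes by (2). The $V$-valued analog is immediate since the cohomology of $\Omega^{\bullet}(P, V)$ is simply $H_{\mathrm{dR}}^{\bullet}(P) \otimes V$. The only step with real content is the identification in the previous paragraph, which is a standard feature of the Atiyah algebroid and its representations, so the Corollary is essentially a direct geometric translation of Theorem 1 rather than requiring substantially new ideas.
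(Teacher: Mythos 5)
Your proposal is correct and follows essentially the same route as the paper: pass to the Atiyah algebroid $TP/G$ with the associated bundle $P\times_{G}V$ as representation, identify $\Omega^{\bullet}\left(P,V\right)^{G}$ with the Lie algebroid complex (the paper cites Proposition 5.3.11 of \cite{MackenzieBook} for this), apply Theorem \ref{thm: euler}, and use averaging over compact $G$ for the third statement.
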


Corollary \ref{cor: euler} is a special case of the following more general result which is part of Theorems 1.1 and 4.1 of \cite{TangYZ13}. The proofs are independent. We understand that this result was already known to the authors of loc.~ cit.~
\begin{thm}
\label{thm: TYZ}
\emph{(Tang, Yao \& Zhang, 2013, \cite{TangYZ13}.)}
Let $M$ be a manifold on which a Lie group $G$ acts properly and cocompactly. 
\begin{enumerate}
\item The cohomology groups $H_{\mathrm{dR},G}^{p}\left(M\right)$ are finite dimensional.
\item If the dimension of $M$ is odd or there exists a nowhere vanishing $G$-invariant vector field on $M$ then $\chi\left(M,G\right)=0$.
\end{enumerate}
\end{thm}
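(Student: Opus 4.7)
The plan is to reduce everything to an elliptic problem on the compact orbit space $M/G$. Since the action is proper, an averaging construction using a cut-off supported in a slice produces a $G$-invariant Riemannian metric on $M$; cocompactness makes $M/G$ compact. Let $\star$ denote the Hodge star and $d^{*}$ the corresponding codifferential; both commute with $G$, so $\Omega^{\bullet}(M)^{G}$ inherits an invariant Laplacian $\Delta = dd^{*} + d^{*}d$.

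For part (1), I would establish finite-dimensionality via elliptic theory on the orbit space. Locally the slice theorem identifies a $G$-neighbourhood in $M$ with $G\times_{H}S$ for a compact subgroup $H\le G$ acting on a slice $S$, under which $G$-invariant forms correspond to $H$-invariant forms on $S$ and $\Delta$ becomes an $H$-invariant elliptic operator on $S$. Using cocompactness to extract a finite atlas of slices and a subordinate invariant partition of unity, I would glue local Hodge decompositions to show that the harmonic subspace $\ker\Delta \cap \Omega^{p}(M)^{G}$ is finite-dimensional and maps isomorphically onto $H^{p}_{\mathrm{dR},G}(M)$.

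For the odd-dimensional case of (2), I would appeal to an invariant Poincar\'{e} duality: after passing to an invariant double cover if necessary (which only doubles the cohomological dimensions and does not affect whether $\chi$ vanishes), the Hodge star defines an isomorphism $\Omega^{p}(M)^{G} \xrightarrow{\sim} \Omega^{n-p}(M)^{G}$ intertwining $\Delta$, hence $\dim H^{p}_{\mathrm{dR},G}(M) = \dim H^{n-p}_{\mathrm{dR},G}(M)$, and for odd $n$ the contributions to $\chi(M,G)$ pair off with opposite signs and cancel. For the remaining case, let $X$ be a nowhere vanishing $G$-invariant vector field with flow $\phi_{t}$. Since $X$ is $G$-invariant, $\phi_{t}$ commutes with the action and preserves $\Omega^{\bullet}(M)^{G}$; Cartan's formula $\mathcal{L}_{X} = d\iota_{X} + \iota_{X}d$ shows that $\phi_{t}^{*}$ induces the identity on $H^{\bullet}_{\mathrm{dR},G}(M)$, so its graded trace equals $\chi(M,G)$. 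A Lefschetz-type fixed-point argument carried out on the compact quotient $M/G$ then localises this trace to orbits fixed by $\phi_{t}$, which are empty for small $t\ne 0$ because $X$ is nowhere zero and $M/G$ is compact.

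The main obstacle will be making these reductions to $M/G$ precise in the presence of non-discrete isotropy, since $M/G$ is in general only a stratified space, neither a manifold nor an orbifold. I would expect to circumvent this either by working within Atiyah's framework of transversally elliptic operators for the $G$-action, or by reducing to the principal orbit type and treating singular strata separately; in either approach the essential point is that cocompactness together with ellipticity in the transverse directions plays the role that ordinary compactness plays in the standard de Rham and Lefschetz theory.
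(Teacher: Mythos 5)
First, a point of orientation: the paper does not prove this theorem. It is quoted verbatim from Tang, Yao and Zhang and used only as context; the author explicitly says the proofs are independent, and what the paper actually establishes is the special case of a free action (Corollary~\ref{cor: euler}), by identifying $\Omega^{\bullet}(P)^{G}$ with the complex of the Atiyah algebroid $TP/G$ --- a genuinely elliptic complex over the compact base $P/G$ --- and applying the Atiyah--Singer index theorem there. Your proposal instead attacks the general proper cocompact case, which is closer in spirit to the cited source: an invariant metric, a Hodge theorem for invariant forms, and Poincar\'e duality via the Hodge star for the odd-dimensional case are indeed the right ingredients, and that part of your outline is sound in broad strokes. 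Two caveats on part (1): ``gluing local Hodge decompositions'' is not a real argument, since the Hodge decomposition is a global Fredholm/closed-range statement, not a local one that patches with a partition of unity. What does work is that invariant forms are determined by their restriction to a compact set $K$ with $GK=M$, so local elliptic estimates plus Rellich compactness give finite-dimensionality of the harmonic space; identifying that space with $H^{\bullet}_{\mathrm{dR},G}(M)$ still requires proving the full decomposition $\Omega^{\bullet}(M)^{G}=\mathcal{H}\oplus d\Omega^{\bullet}(M)^{G}\oplus d^{*}\Omega^{\bullet}(M)^{G}$, which is the actual content of the Hodge theorem for proper cocompact actions and is not supplied by your sketch.

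The genuine gap is in the vector-field case of part (2). Your Lefschetz argument requires that for small $t\ne 0$ the flow $\phi_{t}$ has no ``fixed orbits'', i.e.\ no points with $\phi_{t}(x)\in Gx$. A nowhere vanishing invariant vector field does not guarantee this: $X$ may be tangent to the orbits everywhere, in which case $\phi_{t}$ preserves every orbit and the induced map on $M/G$ is the identity, so the localization yields nothing. The simplest example is $M=G=S^{1}$ acting on itself with $X=\partial_{\theta}$: here $M/G$ is a point, every orbit is fixed by every $\phi_{t}$, and your argument collapses, even though the conclusion $\chi(S^{1},S^{1})=\chi(\mathfrak{u}(1))=0$ is true. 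The mechanism that actually makes a nowhere vanishing invariant vector field force $\chi(M,G)=0$ is symbolic rather than dynamical: $X$ lets one deform the symbol of the de Rham operator (equivalently, kill the relevant Euler form in the index formula of Tang--Yao--Zhang), exactly as a nowhere vanishing section of $TM$ kills $e(TM)$ in the classical Hopf argument. You would need to replace the Lefschetz step by an argument of that type, or restrict to flows transverse to the orbits, which is not part of the hypothesis.
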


Corollary \ref{cor: euler} is proved by applying Theorem \ref{thm: euler} to the Atiyah algebroid of $P$. This result can also be restated in the following equivalent way: if $G$, $P$ and $M$ are as in the statement and $M$ is considered as a trivial $G$-space then
\[
\chi\left(P,G\right)= \chi\left(M,G\right) \cdot \chi\left(G,G\right),
\]
which reduces to Serre's identity 
\[
\chi\left(P\right)= \chi\left(M\right) \cdot \chi\left(G\right)
\]
\cite{Serre51} if $G$ is compact.

Our second main result is a K\"{u}nneth theorem for transitive Lie algebroids. Let $A$ resp.~ $B$ be a transitive Lie algebroid over a compact manifold $M$ resp.~ $N$ and $E$ resp.~ $F$ be a representation of $A$ resp.~ $B$. The \emph{product Lie algebroid} $A\times B = A\boxplus B := \pi_{M}^{*}A \oplus \pi_{N}^{*}B$ is a Lie algebroid over $M\times N$ and the vector bundle $E\boxtimes F := \pi_{M}^{*}E \otimes \pi_{N}^{*}F$ is a representation of $A\boxplus B$ in a natural way. For the precise details see the proof of Theorem \ref{thm: kunneth} in section \ref{sec: kunnethproof}. 
\begin{mainthm}
\label{thm: kunneth}
With the notation as above there is an isomorphism of graded vector spaces
\[
H^{\bullet}\left(A \times B,E\boxtimes F\right) 
\cong H^{\bullet}\left(A,E\right) \otimes H^{\bullet}\left(B,F\right)
\]
which is an isomorphism of graded algebras if $E=M\times \mathbb{R}$ and $F=M\times \mathbb{R}$ are the standard representations.
\end{mainthm}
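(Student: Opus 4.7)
The plan is to identify the Chevalley–Eilenberg complex $\Omega^{\bullet}_{A\boxplus B,\,E\boxtimes F}$ computing $H^{\bullet}(A\times B,E\boxtimes F)$ with the total complex associated to the external tensor product of the elliptic complexes $\C$ on $M$ and $\CB$ on $N$, and then to deduce the isomorphism from a Hodge-theoretic Künneth theorem for elliptic complexes on compact manifolds.

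First, I would exploit the splitting $(A\boxplus B)^{*} = \pi_{M}^{*}A^{*}\oplus \pi_{N}^{*}B^{*}$ to obtain a bigrading
\[
(E\boxtimes F)\otimes \wedge^{n}(A\boxplus B)^{*} \;\cong\; \bigoplus_{p+q=n} \pi_{M}^{*}\bigl(E\otimes\wedge^{p}A^{*}\bigr) \otimes \pi_{N}^{*}\bigl(F\otimes\wedge^{q}B^{*}\bigr),
\]
which induces a corresponding bigrading on global sections. Using the Koszul formula for the CE differential together with the facts that the anchor of $\pi_{M}^{*}X$ is tangent to the $M$-factor and that $[\pi_{M}^{*}X,\,\pi_{N}^{*}Y]=0$ in the product algebroid, a direct check shows that on the $(p,q)$ piece the differential decomposes as $d_{A\boxplus B} = d_{A}\otimes 1 + (-1)^{p}\otimes d_{B}$. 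This realizes $\Omega^{\bullet}_{A\boxplus B,\,E\boxtimes F}$ as the total complex of the external tensor product of $\C$ and $\CB$.

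The core analytic step is then the Künneth theorem for elliptic complexes on compact manifolds, which I would establish by Hodge theory. Choose Hermitian metrics on the vector bundles and a product Riemannian metric on $M\times N$; the formal adjoint and Laplacian then satisfy $\delta_{A\boxplus B} = \delta_{A}\otimes 1 + (-1)^{p}\otimes \delta_{B}$ and $\Delta_{A\boxplus B} = \Delta_{A}\otimes 1 + 1\otimes \Delta_{B}$. Since each factor Laplacian is elliptic on its compact base and hence has finite-dimensional kernel, standard spectral theory for the two commuting non-negative self-adjoint summands identifies the harmonic sections of $\Delta_{A\boxplus B}$ with the algebraic tensor product $\mathcal{H}^{\bullet}(A,E)\otimes \mathcal{H}^{\bullet}(B,F)$, yielding the desired vector space isomorphism via the Hodge decomposition.

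For the algebra structure in the trivial-representation case, the wedge product on $\wedge^{\bullet}(A\boxplus B)^{*}$ translates under the bigraded identification to the signed product of the wedge products on the two factors, so the isomorphism is already multiplicative at the cochain level and descends to cohomology. The principal obstacle is the Hodge-theoretic step, since the smooth global sections of $\pi_{M}^{*}V_{1}\otimes \pi_{N}^{*}V_{2}$ form only a nuclear-Fréchet completion of the algebraic tensor product $\Gamma(V_{1})\otimes\Gamma(V_{2})$; what rescues the argument is that only the finite-dimensional harmonic subspaces contribute, and these are genuinely captured by the algebraic tensor product of harmonic subspaces because each summand Laplacian has finite-dimensional kernel.
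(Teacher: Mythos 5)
Your proposal is correct and follows essentially the same route as the paper: both identify $\Gamma\bigl(E\boxtimes F\otimes\wedge^{\bullet}(A\boxplus B)^{*}\bigr)$, via the splitting of $(A\boxplus B)^{*}$ and the vanishing of brackets between sections pulled back from the two factors, with the external tensor product of the elliptic complexes $\Gamma\left(E\otimes\wedge^{\bullet}A^{*}\right)$ and $\Gamma\left(F\otimes\wedge^{\bullet}B^{*}\right)$, and then invoke the K\"{u}nneth theorem for elliptic complexes, finishing with the same multiplicativity observation for the trivial representations. The only difference is that where the paper simply cites Atiyah--Bott and Tarkhanov for that analytic K\"{u}nneth step, you supply its standard Hodge-theoretic proof (product metric, $\Delta_{A\boxplus B}=\Delta_{A}\otimes 1+1\otimes\Delta_{B}$, kernel of a sum of commuting non-negative self-adjoint operators), correctly flagging that the completed tensor product of section spaces causes no harm because the harmonic spaces are finite dimensional.
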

The proof of Theorem \ref{thm: kunneth} is an application of the K\"{u}nneth theorem for elliptic complexes stated in \cite{AtiyahB67}; see also Theorem 1.3 in \cite{Tarkhanov86} and section 1.4.3 in \cite{TarkhanovBook}. Specialising to the case $M=\mathrm{pt}$ recovers the K\"{u}nneth formula for Lie algebras, and to the case $A=TM$ and $B=TN$ the K\"{u}nneth theorem for de Rham cohomology with local coefficients. Theorem \ref{thm: kunneth} answers a question posed by Kubarski in \cite{Kubarski02}, where the result is proven for the case where $A=TM$, $N=\mathrm{pt}$ and $E$ and $F$ are the standard representations.

\begin{maincor}
\label{cor: kunneth}
Let $G$ and $H$ be Lie groups and $P$ resp.~ $Q$ be a principal $G$ resp.~ $H$ bundle with $P/G$ and $Q/H$ compact. There is an isomorphism of graded vector spaces
\[
H_{\mathrm{dR},G}^{\bullet}\left(P\right) \otimes H_{\mathrm{dR},H}^{\bullet}\left(Q\right) \cong H_{\mathrm{dR},G\times H}^{\bullet}\left(P \times Q \right)
\]
where $G\times H$ acts on $P\times Q$ via the diagonal action.
\end{maincor}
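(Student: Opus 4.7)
The plan is to reduce Corollary \ref{cor: kunneth} to Theorem \ref{thm: kunneth} via the Atiyah algebroid construction. Recall that for a principal $G$-bundle $\pi_{P}:P\to M=P/G$, the Atiyah algebroid $A(P):=TP/G$ is a transitive Lie algebroid over $M$ whose anchor is induced by $d\pi_{P}$. The central classical identification is a canonical isomorphism of complexes
\[
\Omega^{\bullet}(P)^{G} \;\cong\; \Gamma\bigl(\wedge^{\bullet}A(P)^{*}\bigr),
\]
under which the de Rham differential on $G$-invariant forms corresponds to the Chevalley--Eilenberg--Lie algebroid differential. In particular
\[
H_{\mathrm{dR},G}^{\bullet}(P) \;\cong\; H^{\bullet}\bigl(A(P)\bigr),
\]
and analogously for $Q$.

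First I would check that the Atiyah algebroid of a product of principal bundles is the product of the Atiyah algebroids. Since the action of $G\times H$ on $P\times Q$ is the componentwise action, one has a canonical identification
\[
T(P\times Q)/(G\times H) \;\cong\; \pi_{M}^{*}\bigl(TP/G\bigr) \oplus \pi_{N}^{*}\bigl(TQ/H\bigr),
\]
i.e.\ $A(P\times Q) \cong A(P) \boxplus A(Q)$ as transitive Lie algebroids over $M\times N$, with matching anchors and brackets. Combined with the identification of invariant forms above, this gives
\[
H_{\mathrm{dR},G\times H}^{\bullet}(P\times Q) \;\cong\; H^{\bullet}\bigl(A(P)\boxplus A(Q)\bigr).
\]

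The conclusion then follows by applying Theorem \ref{thm: kunneth} with $A=A(P)$, $B=A(Q)$, and $E=M\times\mathbb{R}$, $F=N\times\mathbb{R}$ the standard representations, noting that $M=P/G$ and $N=Q/H$ are compact by hypothesis, so the hypotheses of Theorem \ref{thm: kunneth} are satisfied. Stringing the isomorphisms together yields exactly
\[
H_{\mathrm{dR},G\times H}^{\bullet}(P\times Q) \;\cong\; H^{\bullet}(A(P)) \otimes H^{\bullet}(A(Q)) \;\cong\; H_{\mathrm{dR},G}^{\bullet}(P) \otimes H_{\mathrm{dR},H}^{\bullet}(Q).
\]

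The main obstacle is merely the bookkeeping in the second step: one must verify that the natural maps identifying $\pi_{P\times Q}^{*}$ of the tangent bundle with the external direct sum of the pulled-back Atiyah algebroids are compatible with brackets and anchors. This is a routine but genuine verification, amounting to the observation that horizontal/vertical decompositions with respect to the factorwise $G\times H$-action split along the two factors; once this is in hand, the rest of the argument is a direct translation between the equivariant de Rham and Lie algebroid languages.
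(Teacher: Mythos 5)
Your argument is correct and matches the paper's proof essentially verbatim: both identify $H_{\mathrm{dR},G}^{\bullet}(P)$ with the Atiyah algebroid cohomology $H^{\bullet}(TP/G)$, note that the canonical $G\times H$-equivariant isomorphism $TP\times TQ \cong T(P\times Q)$ descends to $TP/G \times TQ/H \cong T(P\times Q)/(G\times H)$, and then apply Theorem \ref{thm: kunneth}. No issues.
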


If $A$ is a Lie algebroid then by a \emph{compatible smooth $H$-space structure} we shall mean a Lie algebroid morphism $H:A\times A \to A$ for which there exists an element $e \in A$, called a \emph{unit for $H$} that is contained in the zero section and satisfies $H\left(e,x\right)=H\left(x,e\right)=x$ for all $x \in A$. In particular, $H$ makes $A$ into an $H$-space in the sense of topology \cite{HatcherBook}. Note that if $H$ is in fact associative and has inverses then $A$ is an example of an `$\mathcal{LA}$-groupoid' \cite{Mackenzie92}.
\begin{maincor}
\label{cor: hopf}
Suppose that $A$ is a transitive Lie algebroid over a connected compact manifold $M$ and $A$ is equipped with a compatible smooth $H$-space structure $H:A\times A \to A$. Then $H^{\bullet}\left(A\right)$ is isomorphic to a graded exterior algebra with odd degree generators and carries the structure of a graded Hopf algebra if $H$ is associative.
\end{maincor}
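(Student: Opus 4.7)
The plan is to equip $R := H^{\bullet}(A)$ with the structure of a finite-dimensional, connected, graded commutative algebra carrying a coproduct satisfying Hopf's primitivity condition, and then to invoke the classical Hopf structure theorem.

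First I would verify three basic properties of $R$: (i) it is finite-dimensional in each degree and concentrated in degrees $0 \le p \le \mathrm{rank}\;A$, by the Hodge theory of the elliptic Chevalley--Eilenberg complex on the compact manifold $M$ (the same input used in the proof of Theorem \ref{thm: euler}); (ii) $R^{0} = k$ for $k \in \{\mathbb{R},\mathbb{C}\}$ the ground field, since an $A$-invariant function is annihilated by every section of $TM$ (by surjectivity of the anchor) and hence is constant by connectedness of $M$; and (iii) $R$ is graded commutative, because the Chevalley--Eilenberg DGA computing it is.

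Next I would use the Lie algebroid morphism $H \colon A \times A \to A$ together with Theorem \ref{thm: kunneth} to define the coproduct
\[
\Delta := \mathrm{K}^{-1} \circ H^{*} \colon R \to H^{\bullet}(A \times A) \xrightarrow{\sim} R \otimes R,
\]
where $\mathrm{K}$ denotes the Kunneth isomorphism. The unit $e$ lies in the zero section of $A$ over some point $x_{0} \in M$, and the inclusion $\{x_{0}\} \hookrightarrow M$ lifts to a Lie algebroid morphism from the zero algebroid over $\{x_{0}\}$ into $A$; its induced map on cohomology is the counit $\epsilon \colon R \to k$, given by evaluation at $x_{0}$ in degree $0$ and vanishing in positive degrees. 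The unit axioms $H(e,-) = H(-,e) = \mathrm{id}_{A}$ then translate, via functoriality and the naturality of Theorem \ref{thm: kunneth}, into the counit axioms $(\epsilon \otimes \mathrm{id})\Delta = \mathrm{id} = (\mathrm{id} \otimes \epsilon)\Delta$, which in turn force
\[
\Delta(x) = x \otimes 1 + 1 \otimes x + \sum_{i} y_{i} \otimes z_{i} \qquad (x \in R^{>0})
\]
with all $y_{i}, z_{i}$ of positive degree. If $H$ is moreover associative, the same naturality yields coassociativity of $\Delta$, and $(R,\cdot,\Delta,\epsilon)$ becomes a graded Hopf algebra (an antipode is automatic for any connected graded bialgebra).

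Finally, invoking the classical Hopf structure theorem on such an $R$ over a field of characteristic zero yields the stated exterior algebra on odd-degree generators: any even-degree indecomposable would, via the Hopf primitivity condition, generate a polynomial subalgebra and so contradict finite-dimensionality. The main obstacle is the compatibility step: checking that the ring structure on $H^{\bullet}(A \times A)$ corresponds under $\mathrm{K}$ to the Koszul-signed graded tensor product on $R \otimes R$, that the inclusion of a point is genuinely a Lie algebroid morphism, and that associativity of $H$ transports cleanly to coassociativity of $\Delta$. Each of these is a somewhat fiddly but essentially routine diagram chase using the naturality of the isomorphism in Theorem \ref{thm: kunneth}.
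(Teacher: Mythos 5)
Your proposal is correct and follows essentially the same route as the paper: both use the K\"{u}nneth isomorphism of Theorem \ref{thm: kunneth} to define $\Delta = \mathrm{K}^{-1}\circ H^{*}$, identify $H^{0}\left(A\right)\cong\mathbb{R}$ via surjectivity of the anchor and connectedness of $M$, use the fact that $e$ lies in the zero section to obtain the counit and the primitivity condition, and then invoke the algebraic form of Hopf's theorem, with the antipode being automatic for a connected graded bialgebra in the associative case. The only cosmetic difference is that you realise the counit via the inclusion of the zero algebroid over a point, whereas the paper uses the constant Lie algebroid morphism $A\to A$, $x\mapsto e$.
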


If $A=TG$ for $G$ a compact Lie group and $H$ equal to the derivative of the multiplication of $G$ then $H^{\bullet}\left(A\right) = H_\mathrm{dR}^{\bullet}\left(G\right)$ and Corollary \ref{cor: hopf} reduces to the theorem of Hopf on the cohomology of compact Lie groups \cite{Hopf41}. We show in section \ref{sec: examplesH} that if $A$ is transitive then the existence of an $H$-structure is fairly restrictive, in particular the fibres of $L=\mathrm{Ker}\;a$ are necessarily abelian.

\subsection{Relation to existing work}

Theorem \ref{thm: euler} is an extension of the following two Theorems which compute the Euler characteristic $\chi\left(A\right)$ of the standard representation under additional assumptions on $M$ and $A$. 

\begin{thm}
\label{thm: IKV}
\emph{(Itskov, Karasev \& Vorobjev 1998,  \cite{ItskovKV98}, Corollary 4.11.)} If $M$ is simply connected then $\chi\left(A\right) = \chi\left(\g\right) \chi\left(M\right)$, where $\g = \mathrm{Ker}\; a_{x}$ for some $x \in M$.
\end{thm}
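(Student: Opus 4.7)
The plan is to exploit the short exact sequence of Lie algebroids
\[
0 \to L \to A \xrightarrow{a} TM \to 0
\]
coming from transitivity, via a Hochschild--Serre type spectral sequence analogous to the one for Lie algebra extensions. Filtering the Chevalley--Eilenberg complex $\Gamma(\wedge^{\bullet}A^{*})$ by the degree in $a^{*}T^{*}M$ yields a spectral sequence whose $E_{2}$ page has the form
\[
E_{2}^{p,q} = H^{p}_{\mathrm{dR}}(M, \mathcal{H}^{q}(L))
\]
and which converges to $H^{p+q}(A)$, where $\mathcal{H}^{q}(L) \to M$ is the vector bundle whose fibre at $x$ is the Chevalley--Eilenberg cohomology $H^{q}(\g_{x})$ of the isotropy Lie algebra $\g_{x} = L_{x}$.

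The bundles $\mathcal{H}^{q}(L)$ carry a canonical flat connection: the adjoint representation of $A$ on $L$ induces an action on $\wedge^{\bullet}L^{*}$ that commutes with the fibrewise differential and hence descends to a representation of $A$ on $\mathcal{H}^{q}(L)$. Elements of $L$ act by inner derivations, which are cohomologous to zero, so this representation factors through $A/L = TM$, producing a flat connection on $\mathcal{H}^{q}(L)$. Since $M$ is simply connected, any flat bundle over $M$ has trivial monodromy, so there is an isomorphism $\mathcal{H}^{q}(L) \cong M \times H^{q}(\g)$ of flat bundles, and consequently
\[
E_{2}^{p,q} \cong H^{p}_{\mathrm{dR}}(M) \otimes H^{q}(\g).
\]

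The spectral sequence is concentrated in the finite range $0 \le p \le \dim M$, $0 \le q \le \mathrm{rank}\,L$ and converges to $H^{\bullet}(A)$, which is finite dimensional by the elliptic theory underlying Theorem~\ref{thm: euler}. The alternating sum of dimensions is preserved in passing from $E_{2}$ to $E_{\infty}$, so
\[
\chi(A) = \sum_{p,q}(-1)^{p+q}\dim E_{2}^{p,q} = \chi(M)\,\chi(\g).
\]
The main obstacle is setting up the Hochschild--Serre spectral sequence in the Lie algebroid setting together with verifying the flat-bundle structure on $\mathcal{H}^{q}(L)$; the identification of the fibres of $L$ as isomorphic Lie algebras relies on parallel transport by a connection splitting $a$, and the dependence of these isomorphisms on the homotopy class of the path in $M$ is precisely the point at which simple-connectedness is used. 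Once this framework is in place, the Euler characteristic computation becomes purely formal.
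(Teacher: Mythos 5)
Your argument is correct and is essentially the approach of the original proof: this theorem is a cited result (Itskov--Karasev--Vorobjev), and the paper itself only remarks that its proof "uses Mackenzie's spectral sequence," which is precisely the Hochschild--Serre-type spectral sequence for the extension $0 \to L \to A \to TM \to 0$ that you construct, including the flat structure on the bundle of fibrewise cohomologies and its trivialization over a simply connected base. The only point to make explicit is the finite dimensionality of the $E_{2}$ page (compactness of $M$, or a finite good cover), which is what licenses the Euler-characteristic bookkeeping across the pages.
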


\begin{thm}
\label{thm: K}
\emph{(Kubarski 2002, \cite{Kubarski02}, Proposition 7.6.)}
If $A$ is transitive unimodular invariantly oriented, $M$ is oriented and $\mathrm{rank}\;A$ is odd then $\chi\left(A\right)=0$.
\end{thm}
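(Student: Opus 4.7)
The plan is to derive Theorem \ref{thm: K} directly from Main Theorem \ref{thm: euler}, which already covers essentially all cases of interest. I would split on whether the kernel $L = \mathrm{Ker}\;a$ vanishes. If $L \ne 0$, Main Theorem \ref{thm: euler} applied to the trivial representation $E = M \times \mathbb{R}$ yields $\chi\left(A\right) = 0$ at once, with no appeal to the odd-rank, unimodularity, or orientability hypotheses.

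If instead $L = 0$, then $a : A \to TM$ is a surjective vector bundle morphism with zero kernel and hence a vector bundle isomorphism. A short check using the compatibility of the anchor with brackets (the Lie algebroid bracket on $A$ is then forced to agree with the pullback of the bracket on vector fields along $a^{-1}$) shows that $a$ is in fact an isomorphism of Lie algebroids $A \cong TM$. Main Theorem \ref{thm: euler} then gives $\chi\left(A\right) = \chi\left(TM\right) = \chi\left(M\right)$, and this is where the odd-rank hypothesis enters decisively: since $\mathrm{rank}\;A = \mathrm{dim}\;M$ is odd and $M$ is compact and oriented, Poincar\'{e} duality on $M$ forces $\chi\left(M\right) = 0$.

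I expect no substantive obstacle, as the work has already been done inside Main Theorem \ref{thm: euler}; the only care needed is in pinning down the isomorphism $A \cong TM$ in the $L = 0$ branch. It is worth remarking that this route in fact makes both the unimodularity and the invariant orientation hypotheses redundant: they are not used when $L \ne 0$, and in the remaining case they are subsumed by the orientability of $M$. An alternative, presumably closer in spirit to \cite{Kubarski02}, avoids Main Theorem \ref{thm: euler} entirely: unimodularity together with an invariant orientation on $A$ produces a Poincar\'{e}-type duality isomorphism $H^{p}\left(A\right) \cong H^{n-p}\left(A\right)^{*}$ with $n = \mathrm{rank}\;A$, from which $\chi\left(A\right) = \left(-1\right)^{n} \chi\left(A\right)$ and hence $\chi\left(A\right) = 0$ whenever $n$ is odd.
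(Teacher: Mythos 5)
The proposal is correct, but be aware that the paper does not prove this statement at all: Theorem \ref{thm: K} is quoted from Kubarski, and the paper only records that Kubarski's argument goes through a version of Poincar\'{e} duality for Lie algebroids. Your primary route --- deducing the vanishing from Main Theorem \ref{thm: euler} --- is sound and is precisely the logical relationship the paper intends when it calls Theorem \ref{thm: euler} an ``extension'' of Theorem \ref{thm: K}; there is no circularity, since Theorem \ref{thm: euler} is proved via the Atiyah--Singer index theorem with no reference to Kubarski's result. Your case split is right, and the observation that unimodularity and the invariant orientation become redundant is a genuine (and intended) strengthening; in the $L=0$ branch you do not even need $M$ oriented, since every compact odd-dimensional manifold has $\chi\left(M\right)=0$, nor do you need to identify $A$ with $TM$ as a Lie algebroid, because Theorem \ref{thm: euler} already returns $\mathrm{rank}\;E\cdot\chi\left(M\right)=\chi\left(M\right)$ in that branch (the identification is nevertheless correct, as the anchor automatically intertwines brackets). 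The one hypothesis you should make explicit is compactness (and connectedness, or else argue component by component) of $M$: Theorem \ref{thm: euler} requires it, it is implicit in Kubarski's setting, and without it $\chi\left(A\right)$ need not even be defined. Your closing sketch of the duality isomorphism $H^{p}\left(A\right)\cong H^{n-p}\left(A\right)^{*}$ is the actual content of Kubarski's proof and is what the extra hypotheses are for: that route avoids the index theorem entirely and yields the stronger symmetry $\mathrm{dim}\;H^{p}\left(A\right)=\mathrm{dim}\;H^{n-p}\left(A\right)$, whereas your route gives the vanishing under weaker hypotheses but only at the level of the alternating sum.
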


The proofs of these results are very different to that of Theorem \ref{thm: euler}: the proof of Theorem \ref{thm: IKV} uses Mackenzie's spectral sequence \cite{MackenzieBook}, and the proof of Theorem \ref{thm: K} a version of Poincar\'{e} duality for Lie algebroids, see loc.~ cit.~ for the terminology.  We note that the result of \cite{ItskovKV98} holds if $M$ is noncompact but admits a finite good cover in the sense of \cite{BottTBook}. (The vanishing is not stated explicitly in \cite{ItskovKV98} but follows from the statement in loc.~ cit.~ and Theorem \ref{thm: Goldberg}.)

The following Theorem is a slight rephrasing of Theorem 3.1 in \cite{PflaumPT14}, which is an application of the higher index theorem for Lie groupoids proven in \cite{PflaumPT15}.
\begin{thm}
\label{thm: PPT}
\emph{(Pflaum, Posthuma \& Tang 2014, \cite{PflaumPT14}, Theorem 3.1.)}
If $A$ is integrable, oriented and unimodular then the index of the Euler operator $D_{A}$ is given by
\begin{equation}
\label{eqn: PPT}
\mathrm{Ind}_{\Omega}\left(D_{A}\right) = \int_{M}\left< e^{A}\left(A\right),\Omega \right>.
\end{equation}
\end{thm}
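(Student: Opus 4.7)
The plan is to derive Theorem~\ref{thm: PPT} as a direct application of the higher groupoid index theorem of \cite{PflaumPT15} to the Euler operator of the algebroid de Rham complex. Since $A$ is integrable, first fix a Lie groupoid $\mathcal{G} \rightrightarrows M$ whose Lie algebroid is $A$. The unimodularity hypothesis provides a canonical $\mathcal{G}$-invariant smooth density along the source fibres of $\mathcal{G}$ (trivializing the modular class of $A$), and the orientation hypothesis gives a global nowhere-vanishing section of $\wedge^{\mathrm{top}} A^{*}$. Together these are precisely the data needed to produce a well defined formal adjoint $d_{A}^{*}$, and hence the Euler operator $D_{A} = d_{A} + d_{A}^{*}$ on $\Gamma(\wedge^{\bullet} A^{*})$ as a $\mathcal{G}$-equivariant, longitudinally elliptic operator whose leafwise restrictions are the ordinary de Rham Euler operators on the orbits of $\mathcal{G}$.

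The second step is to feed $D_{A}$ and the closed transverse current $\Omega$ into the higher index theorem of \cite{PflaumPT15}. That theorem expresses $\mathrm{Ind}_{\Omega}(D_{A})$ as the pairing of $\Omega$ with the characteristic class obtained by pushing the principal symbol $[\sigma(D_{A})]$ through the Chern character into $H^{\bullet}(A)$ and multiplying by the algebroid Todd class $\mathrm{Td}(A_{\mathbb{C}})$, all integrated over $M$ against $\Omega$. The orientation and unimodularity hypotheses are used at this stage to ensure that the symbol pushforward is well defined and compatible with the density used to define $\mathrm{Ind}_{\Omega}$.

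The crucial step is then to identify the resulting characteristic class with the algebroid Euler class $e^{A}(A)$. The symbol of $D_{A}$ arises from the Koszul complex on $A^{*}$, so the standard Atiyah--Singer Koszul-contraction argument (the one that reduces the topological index of the de Rham Euler complex to the Gauss--Bonnet integrand) transports to the algebroid setting and collapses $\mathrm{ch}[\sigma(D_{A})] \cdot \mathrm{Td}(A_{\mathbb{C}})$ to $e^{A}(A)$. Substituting into the higher index formula produces (\ref{eqn: PPT}). The main obstacle is this last identification: one is computing inside the topological $K$-theory of $A^{*}$ rather than of a cotangent bundle, so the orientation and unimodular hypotheses have to be carefully tracked to ensure that the pushforward and the Koszul contraction are genuinely compatible. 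It is here that the analytic machinery of \cite{PflaumPT15} is used essentially, rather than as a black box.
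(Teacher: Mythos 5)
This theorem is not proved in the paper at all: it is quoted verbatim from Pflaum, Posthuma and Tang, and the surrounding text only records that it follows from the higher index theorem for Lie groupoids of \cite{PflaumPT15}. So there is no internal proof to compare your argument against, and your proposal should be judged as a reconstruction of the argument in \cite{PflaumPT14}. At the level of strategy it is the right reconstruction: integrate $A$ to a groupoid, use the invariant section $\Omega$ to define the trace against which $\mathrm{Ind}_{\Omega}$ is taken, apply the higher index theorem, and collapse the symbol-theoretic integrand to an Euler class by the Koszul/Gauss--Bonnet mechanism.

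As a proof, however, the proposal has two genuine gaps. First, you never engage with what $e^{A}\left(A\right)$ actually is: in the notation of the statement it is $a^{*}\left(e\left(A\right)\right)$, the pullback along $a^{*}:H_{\mathrm{dR}}^{\bullet}\left(M\right)\to H^{\bullet}\left(A\right)$ of the ordinary Euler class of the vector bundle $A$, and the substantive content of the theorem is that the local index density produced by the higher index theorem equals this specific class paired with $\Omega$. Asserting that ``the standard Atiyah--Singer Koszul-contraction argument transports to the algebroid setting'' is precisely the step that needs an actual computation, because the symbol of $D_{A}$ lives over $A^{*}$ rather than $T^{*}M$ and the relevant pushforward is along $A^{*}\to M$, not along a cotangent bundle; one must check that the Thom-isomorphism manipulation still produces $e\left(A\right)$ and that the result lands in $H^{\bullet}\left(A\right)$ via $a^{*}$. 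Second, you slightly misidentify the roles of the hypotheses: $\Omega$ is itself the invariant section of $\wedge^{\mathrm{top}}A\otimes\wedge^{\mathrm{top}}T^{*}M$ whose existence is the unimodularity hypothesis, and it enters as the datum defining a trace on the convolution algebra (hence the subscript in $\mathrm{Ind}_{\Omega}$), not merely as a device for making a formal adjoint; orientability of $A$ is what is needed for the Euler class $e\left(A\right)$ to be defined integrally. Neither gap is fatal --- both are filled in \cite{PflaumPT14} --- but as written your argument defers exactly the steps that constitute the theorem.
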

Here $\Omega$ is an invariant section of the vector bundle $\wedge^{\mathrm{top}}A\otimes \wedge^{\mathrm{top}}T^{*}M$ and $e^{A}\left(A\right) = a^{*} \left(e\left(A\right)\right)$ is the Lie algebroid Euler class of $A$, where $e\left(A\right)$ is the standard Euler class of $A$ and $a^{*}:H_{\mathrm{dR}}^{\bullet}\left(M\right) \to H^{\bullet}\left(A\right)$ is determined by $a:A\to TM$. See loc.~ cit.~ for further details. Under the assumptions of Theorem \ref{thm: PPT} one can deduce Theorem \ref{thm: euler} from \eqref{eqn: PPT}: if $A \ne TM$ is transitive then the left hand side can be identified with a non-zero multiple of $\chi\left(A\right)$, and $\mathrm{rank}\;A > \mathrm{dim}\;M$ implies that $e\left(A\right)$ and therefore $e^{A}\left(A\right)$ and the right hand side vanish.

We also mention \cite{Korman14}, where an index theorem is proved for certain non-transitive complex Lie algebroids called `elliptic involutive structures'.

\subsection{Organization of the paper}

In section \ref{sec: bg} we summarise the relevant definitions concerning Lie algebroids and their representations. The proofs of Theorems \ref{thm: euler} and \ref{thm: kunneth}, and of Corollaries \ref{cor: euler}, \ref{cor: kunneth} and \ref{cor: hopf} are in section \ref{sec: proofs}. In section \ref{sec: examples} we give several examples, including an example showing that Theorem \ref{thm: euler} does not hold for non-transitive Lie algebroids in general, and discuss the existence of compatible $H$-structures.

\subsection{Acknowledgements}

We would like to thank Xiang Tang for explaining Theorems \ref{thm: TYZ} and \ref{thm: PPT} and for helpful discussions about the results of this paper.


\section{Background}
\label{sec: bg}

We summarise the basic definitions regarding $C^{\infty}$ Lie algebroids and their representations. See \cite{MackenzieBook} for further details. Let $M$ be a smooth manifold. A \emph{Lie algebroid} over $M$ is a smooth vector bundle $A$ over $M$ equipped with an $\mathbb{R}$-linear Lie bracket on $\Gamma\left(A\right)$ and a vector bundle morphism $a:A \to TM$, called the anchor, such that the Leibniz rule
\[
\left[\xi,f\xi'\right] = \mathcal{L}_{a\left(\xi\right)}\left(f\right)\xi' + f\left[\xi,\xi'\right]
\]
holds for all $\xi,\xi' \in \Gamma\left(A\right)$ and $f \in C^{\infty}\left(M\right)$ where $\mathcal{L}$ denotes the Lie derivative. Complex Lie algebroids are defined similarly, replacing $TM$ by its complexification $T_{\mathbb{C}}M$. Standard examples include finite dimensional real or complex Lie algebras ($M=\mathrm{pt}$), the tangent bundle $TM$, and the \emph{Atiyah algebroid} $TP/G$ of a principal $G$-bundle $P\to M$ for $G$ a Lie group. These examples are all \emph{transitive}, meaning that the anchor map is surjective and there is a short exact sequence
\[
0 \to L \to A \to TM \to 0
\]
where $L=\mathrm{Ker}\;a$. In fact, $L$ is a locally trivial bundle of Lie algebras. See \cite{MackenzieBook} for the definition of a morphism of Lie algebroids.

Let $A$ be a Lie algebroid. Associated to $A$ is a cochain complex $\Gamma\left(\wedge^{\bullet}A^{*}\right)$ with differential $\mathrm{d}_{A}$ defined analogously to the de Rham differential by
\begin{align*}
\mathrm{d}_{A}f\left(\xi\right) & = \mathcal{L}_{a\left(\xi\right)}\left(f\right) \\
\mathrm{d}_{A}\omega\left(\xi,\xi'\right) & = \mathcal{L}_{a\left(\xi\right)}\left(\omega\left(\xi'\right)\right) - \mathcal{L}_{a\left(\xi'\right)}\left(\omega\left(\xi\right)\right) - \omega\left(\left[\xi,\xi'\right]\right)
\end{align*}
for $f \in C^{\infty}\left(M\right)$, $\omega\in\Gamma\left(A^{*}\right)$ and $\xi,\xi'\in\Gamma\left(A\right)$, and extended to $\Gamma\left(\wedge^{\bullet}A^{*}\right)$ by 
\begin{align*}
\mathrm{d}_{A}\left(\nu\wedge\nu'\right) = \mathrm{d}_{A}\nu\wedge\nu' + \left(-1\right)^{p}\nu\wedge\mathrm{d}_{A}\nu'
\end{align*}
for $\nu \in \Gamma\left(\wedge^{p}A^{*}\right)$ and $\nu'\in\Gamma\left(\wedge^{q}A^{*}\right)$. 

A \emph{representation} of $A$ consists of a smooth vector bundle $E$ over $M$ and a \emph{flat-$A$-connection}, which is a linear map $\nabla:\Gamma\left(E\right) \to \Gamma\left(E\otimes A^{*}\right)$ satisfying 
\begin{align}
\label{eqn: leibniz}
\nabla\left(fe\right) & = e\otimes \mathrm{d}_{A}f + f\nabla\left(e\right) 
\end{align}
and $\nabla^{2}=0$, for $f \in C^{\infty}\left(M\right)$ and $e \in \Gamma\left(E\right)$, where $\nabla$ is extended to $\Gamma\left(E\otimes \wedge^{p}A^{*}\right)$ by the rule 
\begin{equation}
\label{eqn: nabla}
\nabla\left(e\otimes \omega\right)=\nabla\left(e\right)\wedge \omega + e\otimes \mathrm{d}_{A}\omega.
\end{equation}  
The cohomology groups of the cochain complex $\Gamma\left(E\otimes \wedge^{\bullet}A^{*}\right)$ are denoted $H^{\bullet}\left(A,E\right)$, which coincides with the cohomology $H^{\bullet}\left(A\right)$ of $\Gamma\left(\wedge^{\bullet}A^{*}\right)$ if $E=M\times \mathbb{R}$ is the standard representation with $\nabla = \mathrm{d}_{A}$. Representations and cohomology of complex Lie algebroids are defined similarly. 

In the case that $M=\mathrm{pt}$ resp.~ $A=TM$ this reduces to Lie algebra cohomology resp. de Rham cohomology with flat vector bundle coefficients. If $A=TP/G$ is an Atiyah algebroid then there is an isomorphism $\Gamma\left(\wedge^{\bullet}A^{*}\right)\cong\Omega^{\bullet}\left(P\right)^{G}$.

The wedge product makes $\Gamma\left(\wedge^{\bullet}A^{*}\right)$ and $H^{\bullet}\left(A\right)$ into graded algebras, and a morphism of Lie algebroids $\phi:A \to B$ induces morphisms of graded algebras $\phi^{*}:\Gamma\left(\wedge^{\bullet}B^{*}\right) \to \Gamma\left(\wedge^{\bullet}A^{*}\right)$ and $\phi^{*}:H^{\bullet}\left(B\right)\to H^{\bullet}\left(A\right)$.


\section{Proofs of main results}
\label{sec: proofs}

\subsection{Proof of Theorem \ref{thm: euler}}
\label{sec: proofthm1}

It is shown in \cite{Krizka10} that for $x \in M$ and $\alpha \in T^{*}_{x}M$ the symbol complex of $\Gamma\left(E\otimes\wedge^{\bullet}A^{*}\right)$ at $\alpha$ is
\begin{equation}
\label{eqn: symbol}
\cdots \to E_{x} \otimes \wedge^{r}A_{x}^{*} \xrightarrow{\mathrm{id}\otimes\wedge a^{*}\left(\alpha\right)} E_{x}\otimes \wedge^{r+1}A_{x}^{*} \to \cdots 
\end{equation}
and is exact for non-zero $\alpha$ if $A$ is transitive. In particular, $\Gamma\left(E\otimes\wedge^{\bullet}A^{*}\right)$ is an elliptic complex and therefore the cohomology groups $H^{p}\left(A,E\right)$ are finite dimensional \cite{AtiyahB67}.

To calculate $\chi\left(A,E\right)$ we first reduce to a simpler case. Pulling back to the orientation double cover multiplies both $\chi\left(A,E\right)$ and the Euler characteristic $\chi\left(M\right)$ by $2$, complexification leaves $\chi\left(A,E\right)$ unchanged, and if $\mathrm{dim}\;M$ is odd then both the index of any elliptic complex and $\chi\left(M\right)$ are equal to $0$. We can therefore reduce to the case where $M$ is even dimensional and oriented and $A$ and $E$ are complex.

Let $\sigma$ denote the symbol class of the elliptic complex $\Gamma\left(E\otimes \wedge^{\bullet}A^{*}\right)$, $\sigma_{\mathrm{dR}}$ the symbol class of the complexified de Rham complex of $M$, $\pi:T^{*}M\to M$ the bundle projection, $\Psi$ the Thom isomorphism for $T^{*}M$, $e$ the Euler class of $TM$ and $\mathscr{T}$ the Todd class of $T_{\mathbb{C}}M$. Fix a splitting of $a:A \to T_{\mathbb{C}}M$. This determines isomorphisms $A \cong L\oplus T_{\mathbb{C}}M$
and 
\[
\wedge^{r} A^{*} \cong \bigoplus_{p+q=r} \wedge^{p}L^{*} \otimes \wedge^{q}T_{\mathbb{C}}^{*}M
\]
with respect to which the symbol complex \eqref{eqn: symbol} is
\[
\cdots \to \bigoplus_{p+q=r} \left(E_{x} \otimes \wedge^{p}L_{x}^{*}\right) \otimes \wedge^{q}T_{x}^{\ast}M  
 \xrightarrow{\mathrm{id} \otimes \wedge \alpha}
 \bigoplus_{p+q=r} \left(E_{x} \otimes \wedge^{p}L_{x}^{*}\right) \otimes \wedge^{q+1}T_{x}^{\ast}M \to \cdots 
\]
It follows that 
\begin{equation}
\label{eqn: sum}
\sigma = \sum_{p=0}^{\mathrm{rank}\;L}\left[\pi^{*}\left(E\otimes \wedge^{p}L^{*}\right)\right] \cdot \sigma_{\mathrm{dR}} \left[-p\right]
\end{equation}
where $\left[-p\right]$ denotes the shift of a complex by $p$. Using the fact that $\left[-1\right] = -\mathrm{Id}$ (see the Appendix of \cite{Segal68}), the naturality and multiplicativity of the Chern character and the fact that the Thom isomorphism is a morphism of $H^{\bullet}\left(M,\mathbb{Q}\right)$-modules we have
\begin{align}
\nonumber
\Psi^{-1}\mathrm{ch} \left(\sigma\right) & = \Psi^{-1}\mathrm{ch}\;\left(\sum_{p=0}^{\mathrm{rank}\;L}\left[\pi^{*}\left(E\otimes \wedge^{p}L^{*}\right)\right]\cdot \sigma_{\mathrm{dR}} \left[-p\right]  \right) \\
\nonumber
& = \Psi^{-1} \left( \sum_{p=0}^{\mathrm{rank}\;L}\left(-1\right)^{p}\pi^{*}\mathrm{ch}\left(E \otimes\wedge^{p}L^{*}\right) \cdot \mathrm{ch}\;\sigma_{\mathrm{dR}}\right) \\
\label{eqn: Thom}
& = \sum_{p=0}^{\mathrm{rank}\;L}\left(-1\right)^{p}\mathrm{ch}\left(E\otimes\wedge^{p}L^{*}\right)\cdot \Psi^{-1}\mathrm{ch}\;\sigma_{\mathrm{dR}}.
\end{align} 
Substituting \eqref{eqn: Thom} into the cohomological form of the Atiyah-Singer index theorem \cite{AtiyahS68b} and using the fact that $\Psi^{-1}\mathrm{ch}\;\sigma_{\mathrm{dR}}\cdot\mathscr{T}=e$ \cite{AtiyahS68b} is a top degree cohomology class gives
\begin{align*}
\chi\left(A,E\right) & = \left(\Psi^{-1}\mathrm{ch}\;\sigma \cdot \mathscr{T}\right)\left[M\right] \\
& = \left(\sum_{p=0}^{\mathrm{rank}\;L}\left(-1\right)^{p}\mathrm{ch}\left(E\otimes\wedge^{p}L^{*}\right)\cdot \Psi^{-1}\mathrm{ch}\;\sigma_{\mathrm{dR}} \cdot \mathscr{T}\right) \left[M\right] \\
& = \left(\sum_{p=0}^{\mathrm{rank}\;L}\left(-1\right)^{p} \mathrm{rank}\;\wedge^{p}L^{*}\right) \mathrm{rank}\;E \cdot e\left[M\right] \\
& = \begin{cases}
\mathrm{rank}\;E \cdot \chi\left(M\right) \;\text{ if }\; L=0\\
0 \;\text{ otherwise }
\end{cases}
\end{align*}
where the last equality follows from the fact that the alternating sum of the binomial coefficients is zero. This completes the proof of Theorem \ref{thm: euler}.

\begin{remi}
Note that if one can make sense of dividing by the Euler class it is possible to use the equation
\[
\Psi^{-1} \mathrm{ch}\;\sigma \cdot e =
\sum_{p=0}^{\mathrm{rank}\;L}\left(-1\right)^{p} \mathrm{ch} \left(E\otimes \wedge^{p}A^{*}\right)
\]
to give a proof of Theorem \ref{thm: euler} involving only the vector bundles $E\otimes \wedge^{p}A^{*}$ and not the symbol $\sigma$.
\end{remi}

\begin{remi}
The map $A \mapsto \Gamma\left(\wedge^{\bullet}A^{*}\right)$ defines a 1-1 correspondence between transitive real Lie algebroids and real elliptic complexes of the form $\Gamma\left(\wedge^{\bullet}V\right)$ with differential a graded derivation. It follows that Theorem \ref{thm: euler} solves the index problem for every real elliptic complex of this type.
\end{remi}

\subsection{Proof of Corollary \ref{cor: euler}}

The Atiyah algebroid $TP/G$ of $P$ is a transitive Lie algebroid with $\mathrm{Ker}\;a \ne 0$. If $V$ is a non-zero finite dimensional real or complex representation of $G$ then the associated vector bundle $P\times_{G}V$ carries a natural flat $TP/G$-connection defined by $\nabla\left(v\right)\left(\xi\right):=\mathcal{L}_{\xi}\left(v\right)$, where $\Gamma\left(TP/G\right)$ is identified with $\Gamma\left(TP\right)^{G}$ and $\Gamma\left(P\times_{G}V\right)$ with $C^{\infty}\left(P,V\right)^{G}$. It is shown in Proposition 5.3.11 in \cite{MackenzieBook} that there is an isomorphism of cochain complexes 
\begin{equation*}
\Gamma\left(P\times_{G} V\otimes \wedge^{\bullet}\left(TP/G\right)^{*}\right) \cong \Omega^{\bullet}\left(P,V\right)^{G}.
\end{equation*}
The first two statements of Corollary \ref{cor: euler} then follow from Theorem \ref{thm: euler} and the third from the fact that if $G$ is compact then the inclusion $\Omega^{\bullet}\left(P,V\right)^{G} \hookrightarrow \Omega^{\bullet}\left(P,V\right)$ induces an isomorphism of cohomology groups \cite{GreubHVBookII}.

Specialising to the standard one dimensional representation proves the claims for $H_{\mathrm{dR},G}^{\bullet}\left(P\right)$, $\chi\left(P,G\right)$ and $\chi\left(P\right)$. This completes the proof of Corollary \ref{cor: euler}.

\subsection{Proof of Theorem \ref{thm: kunneth}}
\label{sec: kunnethproof}

We will show that there is an isomorphism 
\[
\Gamma\left(E\boxtimes F\otimes \wedge^{\bullet}\left(A\boxplus B\right)^{*}\right) 
\cong
\Gamma\left(E\otimes \wedge^{\bullet}A^{*}\right) \boxtimes \Gamma\left(F\otimes\wedge^{\bullet}B^{*}\right)
\]
where the right hand side is the outer tensor product of elliptic complexes. The first statment then follows from the K\"{u}nneth theorem for elliptic complexes stated in \cite{AtiyahB67}; see also Theorem 1.3 in \cite{Tarkhanov86} and 
section 1.4.3 in \cite{TarkhanovBook}. See \cite{MackenzieBook} for products of Lie algebroids and \cite{Crainic03} for pullbacks and tensor products of representations.

Denote by  $\pM:M\times N \to M$ and $\pN : M\times N \to N$ the two projections. If $\xi\in\Gamma\left(A\right)$ and $\nu\in\Gamma\left(B\right)$ then we set $\xi\boxplus\nu := \left(\mathrm{pr}_{M}^{*}\xi,\mathrm{pr}_{N}^{*}\nu\right) \in \Gamma\left(A\boxplus B\right)$. We use a similar notation $e\boxtimes f:=\mathrm{pr}_{M}^{*}e \otimes \mathrm{pr}_{N}^{*}$ for sections of $E\boxtimes F$ and other outer tensor products of vector bundles.

The anchor map of $A\boxplus B$ is given by the direct sum of the anchor maps of $A$ and $B$, and the Lie bracket is determined by the Leibniz rule and the definition 
\[
\left[\xi \boxplus \nu,\xi'\boxplus \nu' \right] = \left[\xi,\xi'\right] \boxplus \left[\nu,\nu'\right].
\]
This implies that with respect to the canonical isomorphisms
\begin{equation*}
\wedge^{r}\left(A \boxplus B\right)^{*}\cong\bigoplus_{p+q=r}  \wedge^{p} A^{*} \boxtimes \wedge^{q} B^{*}
\end{equation*}
the differential $\mathrm{d}_{A\times B}$ on $\Gamma\left(\wedge^{\bullet}\left(A\boxplus B\right)^{*}\right)$ is given by
\begin{equation}
\label{eqn: dAB}
\mathrm{d}_{A\times B} \left(\omega \boxtimes \delta\right) = \mathrm{d}_{A}\omega \boxtimes \delta + \left(-1\right)^{p} \omega \boxtimes \mathrm{d}_{B}\delta
\end{equation}
for $\omega \in \Gamma\left(\wedge^{p}A^{*}\right)$ and $\delta \in \Gamma\left(\wedge^{q}B^{*}\right)$.

There are flat $A\times B$ connections $\nabla^{E}$ on $\pM E$ and $\nabla^{F}$ on $\pN F$ defined via the natural Lie algebroid morphisms $A\times B \to A$ and $A\times B \to B$. As a representation of $A\times B$, $E\boxtimes F$ is by definition the tensor product of the representations $\pM E$ and $\pN F$. Explicitly, the flat connection $\nabla^{E\boxtimes F}$ on $E\boxtimes F$ is determined by the Leibniz rule \eqref{eqn: leibniz} and
\begin{equation}
\label{eqn: nablaEF}
\nabla^{E\boxtimes F}\left(e\boxtimes f\right) = \nabla^{E}\left(e\right)\boxtimes f + e\boxtimes\nabla^{F}\left(f\right),
\end{equation}
where the terms on the right hand side are defined via the identification of $\left(E\otimes A^{*}\right) \boxtimes F$ and $E \boxtimes \left(F \otimes B^{*}\right)$ with subbundles of $E\boxtimes F \otimes \left(A^{*} \boxplus B^{*}\right)$. It then follows from \eqref{eqn: nabla}, \eqref{eqn: dAB} and \eqref{eqn: nablaEF} that with respect to the canonical isomorphisms
\[
E\boxtimes F \otimes \wedge^{r}\left(A\times B\right)^{*} \cong 
\bigoplus_{p+q=r} \left(E \otimes \wedge^{p}A^{*}\right) \boxtimes \left(E \otimes \wedge^{q}B^{*}\right)
\]
the extension of $\nabla^{E\boxtimes F}$ to higher exterior powers of $\left(A\times B\right)^{*}$ is 
\begin{align*}
\nabla^{E\boxtimes F} \left(\left(e \otimes \omega\right) \boxtimes \left(f \otimes \delta\right) \right) 
& =  \nabla^{E}\left(e \otimes \omega\right) \boxtimes \left(f\otimes \delta\right) \\
& +   \left(-1\right)^{p}  \left(e \otimes \omega\right) \boxtimes \nabla^{F}\left(f\otimes \delta\right)
\end{align*}
for $e\in \Gamma\left(E\right)$, $f\in\Gamma\left(F\right)$, $\omega\in\Gamma\left(\wedge^{p}A^{*}\right)$ and $\delta\in\Gamma\left(\wedge^{q}B^{*}\right)$ as required. 

If $E=M\times\mathbb{R}$ and $F=N\times\mathbb{R}$ are the standard representations then the maps $H^{\bullet}\left(A\right) \to H^{\bullet}\left(A\times B\right)$ and $H^{\bullet}\left(A\right) \to H^{\bullet}\left(A\times B\right)$ determined by the projections $A\times B \to A$ and $A\times B \to B$ are morphisms of graded algebras and therefore so is the isomorphism of graded vector spaces $H^{\bullet}\left(A\right)\otimes H^{\bullet}\left(B\right) \to H^{\bullet}\left(A \times B\right)$, $\left[\omega\right]\otimes \left[\delta\right] \mapsto \left[\omega\boxtimes\delta\right]$. This completes the proof of Theorem \ref{thm: kunneth}.

\begin{remi}
Theorem \ref{thm: kunneth} continues to hold if $M$ and $N$ are noncompact but the cohomology groups $H^{p}\left(A,E\right)$ and $H^{q}\left(B,F\right)$  are finite dimensional, in which case they are Hausdorff topological vector spaces and Theorem 1.3 of \cite{Tarkhanov86} still applies.
\end{remi}

\subsection{Proof of Corollary \ref{cor: kunneth}}

The canonical isomorphism $TP\times TQ \cong T\left(P\times Q\right)$ of Lie algebroids is $G\times H$ equivariant and therefore descends to an isomorphism $TP/G \times TQ/H \cong T\left(P\times Q\right)/\left(G\times H\right)$. The statement then follows from Theorem \ref{thm: kunneth}. This completes the proof of Corollary \ref{cor: kunneth}.

\subsection{Proof of Corollary \ref{cor: hopf}}

The proof of the first statement follows the proof of Hopf's theorem on the structure of the cohomology ring of an $H$-space given in section 3C of \cite{HatcherBook}. (Note that in \cite{HatcherBook} the term `Hopf algebra' is used to describe a structure closer to that of a bialgebra, see Remark 20.3.2 in \cite{MayPBookConcise} for a discussion of this point and chapter 20 of loc.~cit.~ for the definitions of bialgebras and Hopf algebras.) 

Define $\Delta$ to be the composition
\[
H^{\bullet}\left(A\right) \xrightarrow{H^{*}} H^{\bullet}\left(A\times A\right) \xrightarrow{\cong} H^{\bullet}\left(A\right) \otimes H^{\bullet}\left(A\right)
\]
where $H^{*}$ is the map on cohomology determined by the Lie algebroid morphism $H:A\times A \to A$ and the second map is the K\"{u}nneth isomorphism of Theorem \ref{thm: kunneth}. The map $\Delta$ is a graded algebra morphism as it is a composition of such maps. 

The projection $\varepsilon : H^{\bullet}\left(A\right) \to H^{0}\left(A\right)$ is an algebra morphism. 
Define 
\[
H^{+}\left(A\right) := \mathrm{Ker}\;\varepsilon = \bigoplus_{p=1}^{\mathrm{rank}\;A} H^{p}\left(A\right).
\]
The canonical algebra homomorphism $\mathbb{R} \to H^{0}\left(A\right)$ mapping $\lambda \in \mathbb{R}$ to the corresponding constant function is an isomorphism: if $f \in C^{\infty}\left(M\right)$ and $\left(\mathrm{d}_{A}f\right)\left(\xi\right) = \mathcal{L}_{a\left(\xi\right)}\left(f\right) = 0$ for all $\xi \in \Gamma\left(A\right)$ then $f$ is constant because $a:A\to TM$ is surjective and $M$ is connected.

As $e$ is contained in the zero section of $A$ the map $A \to A$, $x \mapsto e$ is a Lie algebroid morphism. It then follows from the fact that $A \times A$ is a product in the category of Lie algebroids \cite{MackenzieBook} that the map $A \to A\times A$, $x \mapsto \left(x,e\right)$ is a Lie algebroid morphism also. The same argument as in the $H$-space case (see the diagram on p.~283 of \cite{HatcherBook}) then shows that 
\[
\Delta\left(\omega\right) - \omega\otimes 1 - 1\otimes \omega \in H^{+}\left(A\right) \otimes H^{+}\left(A\right)
\]
for $\omega \in H^{p}\left(A\right)$ with $p>0$.

The proceeding discussion shows that $H^{\bullet}\left(A\right)$ and $\Delta$ satisfy the assumptions in the algebraic form of Hopf's theorem \cite{Hopf41}, see Theorem 3C.4 in \cite{HatcherBook} or section 2.4 in \cite{Cartier07}, which shows that $H^{\bullet}\left(A\right)$ is an exterior algebra with generators of odd degree.

Now assume that $H$ is associative. Then $\Delta$ is coassociative and $H^{\bullet}\left(A\right)$ together with $\Delta$ and $\varepsilon:H^{\bullet}\left(A\right) \to H^{0}\left(A\right) \cong \mathbb{R}$ is a bialgebra. It is straightforward to check that $H^{\bullet}\left(A\right)$ satisfies Definition 21.3.1 of \cite{MayPBookConcise}, and then Proposition 21.3.3 in loc.~ cit~ shows that $H^{\bullet}\left(A\right)$ admits a unique antipode and is therefore a Hopf algebra.

\section{Examples and further results}
\label{sec: examples}

\subsection{Action Lie algebroids}

Let $\g$ be a finite dimensional real Lie algebra, $M$ a smooth manifold and $\phi:\g \to \Gamma\left(TM\right)$ a Lie algebra homomorphism. The Lie derivative then makes $C^{\infty}\left(M\right)$ into a $\g$ module, and by evaluation, $\phi$ determines a linear map $\phi_{x}:\g \to T_{x}M$ for each $x\in M$.
\begin{prop}
Assume that $M$ is compact and $\phi_{x}$ is surjective for all $x\in M$.
\begin{enumerate}
\item The Lie algebra cohomology groups $H^{p}\left(\g,C^{\infty}\left(M\right)\right)$ are finite dimensional. 
\item $
\sum_{p=0}^{\mathrm{rank}\;\g} \mathrm{dim}\;H^{p}\left(\g,C^{\infty}\left(M\right)\right) =  
\begin{cases}
\chi\left(M\right) \;\text{ if }\; \mathrm{dim}\;\g = \mathrm{dim}\; M \\
0 \;\text{ else. }\;
\end{cases}
$
\end{enumerate}
\end{prop}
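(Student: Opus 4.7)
The strategy is to recognise $C^{\bullet}(\g,C^{\infty}(M))$ as the Lie algebroid complex of an auxiliary transitive Lie algebroid over $M$, and then invoke Theorem \ref{thm: euler}.

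First I would form the \emph{action Lie algebroid} $A = \g \ltimes M$ associated to $\phi$. Its underlying vector bundle is the trivial bundle $M \times \g$, its anchor is $a(x,\xi) = \phi(\xi)_{x}$, and its bracket is determined by the Leibniz rule together with $[1 \otimes \xi,\, 1 \otimes \eta] = 1 \otimes [\xi,\eta]$ on constant sections. The hypothesis that $\phi_{x}$ is surjective for every $x \in M$ is exactly the statement that $A$ is transitive, and in that case the kernel bundle $L = \mathrm{Ker}\; a$ has constant rank $\dim \g - \dim M$.

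Next I would identify the two cochain complexes. As a graded vector space,
\[
\Gamma(\wedge^{\bullet}A^{*}) \;=\; C^{\infty}(M) \otimes \wedge^{\bullet}\g^{*},
\]
which is the standard Chevalley--Eilenberg complex $C^{\bullet}(\g,C^{\infty}(M))$ for the action of $\g$ on $C^{\infty}(M)$ by Lie derivatives along $\phi$. Comparing the definition of $\mathrm{d}_{A}$ from Section \ref{sec: bg} with the Chevalley--Eilenberg differential shows that the two coincide, giving an isomorphism $H^{\bullet}(\g,C^{\infty}(M)) \cong H^{\bullet}(A)$.

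Finally I would apply Theorem \ref{thm: euler} to $A$ with the standard one-dimensional representation. Part (1) of the proposition is the finite-dimensionality assertion of that theorem, which rests on ellipticity of the Lie algebroid complex of a transitive $A$ over a compact base. For part (2), $L = 0$ if and only if $\dim \g = \dim M$, so Theorem \ref{thm: euler} yields $\chi(A) = \chi(M)$ in that case and $\chi(A) = 0$ otherwise. If $M$ is not connected one applies the argument componentwise, noting that the rank of $L$ is the same on every component by surjectivity of $\phi_{x}$. The only step involving any real verification is the identification of the two differentials, and that amounts to direct substitution into the defining formulas; the main conceptual content is simply the observation that the setup falls within the transitive Lie algebroid framework, after which Theorem \ref{thm: euler} does all the work.
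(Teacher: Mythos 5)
Your proposal is correct and follows exactly the paper's argument: form the action Lie algebroid $\g\ltimes M$, identify its complex $\Gamma(\wedge^{\bullet}(\g\ltimes M)^{*})$ with the Chevalley--Eilenberg complex $\wedge^{\bullet}\g^{*}\otimes C^{\infty}(M)$, observe that surjectivity of each $\phi_{x}$ makes the algebroid transitive, and apply Theorem \ref{thm: euler}. The extra details you supply (matching the differentials, the componentwise reduction to the connected case) are fine and only elaborate on what the paper leaves implicit.
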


\begin{proof}
Associated to $\phi:\g\to \Gamma\left(TM\right)$ is the \emph{action Lie algebroid} $\g\ltimes M$, for which the complex $\Gamma\left(\wedge^{\bullet}\left(\g\ltimes M\right)^{*}\right)$ is isomorphic to the Chevalley-Eilenberg complex $\wedge^{\bullet}\g^{*} \otimes C^{\infty}\left(M\right)$ \cite{MackenzieBook}. Under the assumption on $\phi$ the action Lie algebroid is transitive and the result follows from Theorem \ref{thm: euler}.
\end{proof}

\subsection{Non-transitive Lie algebroids}

The following Example shows that if $A$ is not transitive and $H^{p}\left(A,E\right)$ is finite dimensional then $\chi\left(A,E\right)$ is in general non-zero. 
\begin{ex}
Let $M=\mathbb{R}$, $n \in \mathbb{N}$ and $p$ the polynomial function $\left(t-1\right)\cdots\left(t-n\right)$. Consider the Lie algebroid $A:=M\times \mathbb{R}$ with anchor map $f \mapsto p\partial_{t}$ and Lie bracket
\[
\left[f,g\right] = p\left(f\frac{dg}{dt} -g\frac{df}{dt}\right).
\]
The complex $\Gamma\left(\wedge^{\bullet}A^{*}\right)$ is isomorphic to the non-elliptic complex
\[
C^{\infty}\left(\mathbb{R}\right) \xrightarrow{p\partial_{t}} C^{\infty}\left(\mathbb{R}\right)
\]
which has cohomology groups $\mathrm{Ker}\left(p\partial_{t}\right)=\mathbb{R}$ and $\mathrm{Coker}\left(p\partial_{t}\right)\cong \mathbb{R}^{n}$ because $\partial_{t}$ is surjective and $p$ generates the vanishing
 ideal of $p^{-1}\left(0\right)$. In particular $\chi\left(A\right) = 1-n$.
\end{ex}

\begin{remi}
One can give an analogous example with $M$ compact by replacing $\mathbb{R}$ by $S^{1}$ and $p$ by any smooth function with $n$ isolated zeros of order 1. In this case $\chi\left(A\right)=-n$ which is not a multiple of $\chi\left(S^{1}\right)=0$.
\end{remi}

\subsection{$H$-structures}
\label{sec: examplesH}

Throughout section \ref{sec: examplesH} $A$ denotes a Lie algebroid over $M$ equipped with a compatible $H$-structure $H:A\times A \to A$ and $e \in A$ is a unit for $H$ (see the paragraph above Corollary \ref{cor: hopf} for the definition). If $H$ covers a smooth map $f:M\times M \to M$ and $e \in A_{m}$ then $m$ is a unit for $f$ and $M$ is an $H$-space. The following topological restrictions on $H$-spaces are well known, see section 3.C of \cite{HatcherBook}.
\begin{prop}
If $M$ is connected then $\pi_{1}\left(M\right)$ is abelian. If $M$ is also compact and positive dimensional then $\chi\left(M\right)=0$ and $H^{\bullet}\left(M,\mathbb{Q}\right)$ is an exterior algebra.
\end{prop}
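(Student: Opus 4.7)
The plan is to observe that the $H$-structure on $A$ induces an $H$-space structure on the base manifold $M$, and then reduce the proposition entirely to the classical results on topological $H$-spaces collected in Section 3.C of \cite{HatcherBook}.

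First I would verify the induced structure on $M$. As a Lie algebroid morphism, $H:A\times A \to A$ is in particular a vector bundle morphism, and so covers a unique smooth map $f:M\times M \to M$. The unit $e$ lies in the zero section over some $m\in M$. Applying the identity $H(e,x)=H(x,e)=x$ to elements $x$ of the zero section over an arbitrary $y \in M$ forces $f(m,y)=y=f(y,m)$, so that $(M,f,m)$ is a smooth, and in particular topological, $H$-space.

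With $M$ identified as a connected $H$-space, the abelianness of $\pi_{1}(M)$ is immediate from Proposition 3C.1 of \cite{HatcherBook}. Under the additional hypothesis that $M$ is compact, its rational cohomology is finite dimensional, and Hopf's theorem (Theorem 3C.4 of \cite{HatcherBook}) applied to $f^{*}$ together with the K\"unneth formula yields that $H^{\bullet}(M,\mathbb{Q})$ is a free graded-commutative algebra on generators of positive degree; finite-dimensionality rules out even-degree generators (a polynomial factor would be infinite dimensional), leaving an exterior algebra on finitely many odd-degree generators. Since $M$ is a compact connected $H$-space of positive dimension it is orientable (every left translation $l_{g}:M\to M$ is homotopic to the identity $l_{m}$ via a path from $m$ to $g$, hence orientation-preserving, so $w_{1}=0$), so $H^{\dim M}(M,\mathbb{Q})\ne 0$ and the exterior algebra has at least one generator. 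The Euler characteristic of an exterior algebra on $k\ge 1$ odd-degree generators is $\prod_{i=1}^{k}(1+(-1)^{|x_{i}|})=0$, giving $\chi(M)=0$.

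There is no genuine obstacle: the only step requiring any care is the first bookkeeping one, verifying that a compatible $H$-structure on $A$ really descends to an $H$-space structure on $M$, after which everything is a direct appeal to standard references.
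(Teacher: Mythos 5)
Your overall strategy is exactly the paper's: the paper offers no written proof of this proposition, only the observation (in the sentence preceding it) that $H$ covers a smooth map $f:M\times M\to M$ making $M$ an $H$-space with unit $m$, followed by a citation to section 3.C of \cite{HatcherBook}. Your verification that $f$ exists and that $f(m,y)=y=f(y,m)$ (by applying the unit identity to zero-section elements) is correct and is the only step the paper itself spells out; the abelianness of $\pi_{1}$ and the exterior-algebra structure via Hopf's theorem and finite-dimensionality are then standard, as you say.

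The one genuine gap is the parenthetical orientability argument used to rule out the case $H^{\bullet}(M,\mathbb{Q})=\mathbb{Q}$ (which would give $\chi(M)=1$, as for $\mathbb{RP}^{2k}$). For an $H$-space the left translation $l_{g}=f(g,-)$ is in general not a diffeomorphism, so ``$l_{g}$ is orientation-preserving'' has no direct meaning, and in any case the vanishing of $w_{1}(M)$ is a statement about loops in $M$ (the monodromy of the orientation line bundle), not about self-maps of $M$: every map homotopic to the identity acts as the identity on $H^{n}\left(M,\mathbb{Z}/2\right)\cong\mathbb{Z}/2$ whether or not $M$ is orientable, so nothing about $w_{1}$ follows from $l_{g}\simeq\mathrm{id}$ as written. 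The conclusion is nevertheless true, and your idea is the germ of the correct argument; the standard repair is to pass to the orientation double cover $M_{w}\to M$. Since $f_{*}(a,b)=ab$ on $\pi_{1}$ (Eckmann--Hilton) and $\ker w_{1}$ is a subgroup, $f$ lifts to $M_{w}\times M_{w}\to M_{w}$ with a strict unit, so $M_{w}$ is a compact connected \emph{orientable} $H$-space; your argument applied to $M_{w}$ gives $\chi\left(M_{w}\right)=0$, hence $\chi(M)=\tfrac{1}{2}\chi\left(M_{w}\right)=0$. (Alternatively, the lifted translation through a lift of $g$ with $w_{1}(g)\ne 0$ is the nontrivial deck transformation and is homotopic to the identity, which is impossible for the orientation cover of a non-orientable manifold, so $M$ is in fact orientable and your original route goes through.) With that step repaired the proposal is complete.
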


\begin{prop}
\label{prop: Hg}
Let $\g$ be a Lie algebra. There exists a Lie algebra morphism $H:\g \times \g \to \g$ satisfying $H\left(0,x\right) = H\left(x,0\right) = x$ for all $x\in \g$ if and only if $\g$ is abelian, in which case the addition map $\left(x,y\right) \mapsto x+y$ is the unique map satisfying these conditions.
\end{prop}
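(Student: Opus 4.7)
The plan is to exploit the fact that a Lie algebra morphism is in particular a linear map, which together with the unit conditions pins down $H$ completely, and then to extract the abelian condition from the bracket-preservation requirement.

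First I would observe that any Lie algebra morphism $H : \g \times \g \to \g$ is linear, so it can be written as $H(x,y) = \alpha(x) + \beta(y)$ for some linear maps $\alpha,\beta : \g \to \g$. The unital conditions $H(x,0) = x$ and $H(0,y) = y$ then force $\alpha = \beta = \mathrm{id}_{\g}$, so that necessarily $H(x,y) = x + y$. In particular this establishes uniqueness for free, before the bracket is even considered.

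Next I would impose that $H$ preserves brackets. Using the bracket $[(x_{1},y_{1}),(x_{2},y_{2})] = ([x_{1},x_{2}],[y_{1},y_{2}])$ on $\g \times \g$, the identity $H([(x_{1},y_{1}),(x_{2},y_{2})]) = [H(x_{1},y_{1}),H(x_{2},y_{2})]$ expands to
\[
[x_{1},x_{2}] + [y_{1},y_{2}] = [x_{1}+y_{1},\,x_{2}+y_{2}],
\]
which after bilinear expansion reduces to $[x_{1},y_{2}] + [y_{1},x_{2}] = 0$ for all $x_{1},x_{2},y_{1},y_{2} \in \g$. Setting $y_{1}=0$ gives $[x_{1},y_{2}] = 0$ for all $x_{1},y_{2} \in \g$, so $\g$ must be abelian. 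Conversely, if $\g$ is abelian then the addition map is trivially a Lie algebra morphism (both sides of the bracket identity vanish) and it satisfies the unital conditions, finishing the equivalence.

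There is no real obstacle here — the argument is essentially the Eckmann–Hilton observation in the linear setting, and the only mild point to flag is that linearity of $H$ (which is automatic from being a Lie algebra morphism) is what makes the unit conditions so rigid, eliminating the need to argue up to homotopy as one would in the topological $H$-space case.
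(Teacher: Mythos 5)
Your proof is correct and follows essentially the same route as the paper: decompose the linear map as $H(x,y)=\alpha(x)+\beta(y)$, use the unit conditions to force $\alpha=\beta=\mathrm{id}_{\g}$, and then read off abelianness from bracket preservation. The only cosmetic difference is that you expand the bracket identity explicitly where the paper phrases it as ``$H_{1}$ and $H_{2}$ are morphisms with commuting images''; the content is identical.
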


\begin{proof}
Suppose that $H:\g \times \g \to \g$ is a linear map and write $H\left(x,y\right) = H_{1}\left(x\right) + H_{2}\left(y\right)$. Then $H$ is a Lie algebra morphism if and only if $H_{1}$ and $H_{2}$ are Lie algebra morphisms from $\g$ to $\g$ whose images commute. The condition $H\left(0,x\right) = H\left(x,0\right) = x$ is equivalent to $H_{2}\left(x\right) = H_{1}\left(x\right)= x$ and therefore $H_{1}=H_{2}=\mathrm{id}_{\g}$. As the images of $H_{1}$ and $H_{2}$ commute we must have that $\g$ is abelian.
\end{proof}

\begin{remi}
If $\g$ is abelian then $H^{\bullet}\left(\g\right) = \wedge^{\bullet}\g^{*}$ and if $\g$ is also finite dimensional then the Hopf algebra structure associated to the unique $H$-structure by Corollary \ref{cor: hopf} is the standard Hopf algebra structure on an exterior algebra.
\end{remi}

\begin{prop}
If $M$ is connected then the fibres of $L=\mathrm{Ker}\; a$ are abelian.
\end{prop}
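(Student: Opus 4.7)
Since $e$ lies in the zero section of $A$, we have $e = 0_{m}$ for a unique $m \in M$. The Lie algebroid morphism $H: A \times A \to A$ covers a smooth map $f: M \times M \to M$, and applying the identities $H(e, x) = H(x, e) = x$ to $x \in A_{p}$ forces $f(m, p) = f(p, m) = p$, so $m$ is a two-sided unit for $f$ and $M$ is an $H$-space in the topological sense. My plan is to extract a Lie algebra morphism $L_{m} \times L_{m} \to L_{m}$ from $H$ and then invoke Proposition \ref{prop: Hg}, before globalising via connectedness.

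First I would study the fibrewise linear map $H_{(m,m)}: A_{m} \oplus A_{m} \to A_{m}$ induced by the vector bundle morphism underlying $H$. Writing $H_{(m,m)}(x, y) = H_{1}(x) + H_{2}(y)$ and using the unit conditions exactly as in the proof of Proposition \ref{prop: Hg}, we conclude that $H_{(m,m)}$ is the addition map. Moreover, the anchor compatibility $a \circ H = df \circ (a \oplus a)$ applied at $(m,m)$ shows that for $x, y \in L_{m}$ one has $a(H_{(m,m)}(x, y)) = df_{(m,m)}(0, 0) = 0$, so $H_{(m,m)}$ restricts to the addition map $L_{m} \oplus L_{m} \to L_{m}$.

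The key technical point is that this restriction is in fact a morphism of Lie algebras. For this I would check that the canonical inclusion $L_{m} \hookrightarrow A$ covering $\{m\} \hookrightarrow M$ is itself a Lie algebroid morphism: its anchor vanishes trivially, and for $x, y \in L_{m}$ the bracket $[x, y]_{L_{m}}$ agrees with $[\xi, \eta]_{m}$ for any extensions $\xi, \eta \in \Gamma(L) \subset \Gamma(A)$, by definition of the isotropy Lie algebra structure. Consequently the composition $L_{m} \times L_{m} \hookrightarrow A \times A \xrightarrow{H} A$ is a Lie algebroid morphism with image in $L_{m}$, which is equivalent to the claim that the induced map $L_{m} \times L_{m} \to L_{m}$ is a Lie algebra morphism. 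Since this morphism is the addition map and satisfies the unit conditions of Proposition \ref{prop: Hg}, that Proposition forces $L_{m}$ to be abelian.

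Finally, because $A$ is transitive, $L$ is a locally trivial bundle of Lie algebras, so the isomorphism class of the fibre $L_{m'}$ is locally constant on $M$; as $M$ is connected, every fibre is isomorphic to $L_{m}$ as a Lie algebra and is therefore abelian. The main obstacle in the argument is the middle step: to justify passing from the fibrewise identity $H_{(m,m)} = \mathrm{addition}$ to a Lie algebra morphism statement, one must carefully unpack the Lie algebroid morphism axioms for $H$ over the different bases $M \times M$ and $M$, and verify that fibrewise inclusions of isotropies are genuine Lie algebroid morphisms.
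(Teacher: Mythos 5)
Your proposal is correct and follows essentially the same route as the paper: restrict $H$ to the fibre over the unit point, observe via the anchor compatibility that it maps $L_{m}\times L_{m}$ into $L_{m}$ as a Lie algebra morphism, apply Proposition \ref{prop: Hg}, and conclude for all fibres by local triviality of $L$ and connectedness of $M$. You simply spell out the intermediate verifications that the paper leaves implicit.
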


\begin{proof}
As $L$ is a locally trivial bundle of Lie algebras \cite{MackenzieBook} it is sufficient to show that $L_{m}$ is abelian.
$H$ restricts to a linear map $A_{m}\times A_{m} \to A_{m}$, and to a morphism of Lie algebras $L_{m}\times L_{m} \to L_{m}$ because $H$ is a morphism of transitive Lie algebroids. Applying Proposition \ref{prop: Hg} to this morphism shows that $L_{m}$ is abelian.
\end{proof}

 
\bibliography{mybib}{}

\begin{thebibliography}{10}

\bibitem{AtiyahB67}
{\sc M.~F. Atiyah and R.~Bott}, {\em A {L}efschetz fixed point formula for
  elliptic complexes. {I}}, Ann. of Math. (2), 86 (1967), pp.~374--407.

\bibitem{AtiyahS68b}
{\sc M.~F. Atiyah and I.~M. Singer}, {\em The index of elliptic operators.
  {III}}, Ann. of Math. (2), 87 (1968), pp.~546--604.

\bibitem{BottTBook}
{\sc R.~Bott and L.~W. Tu}, {\em Differential forms in algebraic topology},
  vol.~82 of Graduate Texts in Mathematics, Springer-Verlag, New York-Berlin,
  1982.

\bibitem{Cartier07}
{\sc P.~Cartier}, {\em A primer of {H}opf algebras}, in Frontiers in number
  theory, physics, and geometry. {II}, Springer, Berlin, 2007, pp.~537--615.

\bibitem{ChevalleyE48}
{\sc C.~Chevalley and S.~Eilenberg}, {\em Cohomology theory of {L}ie groups and
  {L}ie algebras}, Trans. Amer. Math. Soc., 63 (1948), pp.~85--124.

\bibitem{Crainic03}
{\sc M.~Crainic}, {\em Differentiable and algebroid cohomology, van {E}st
  isomorphisms, and characteristic classes}, Comment. Math. Helv., 78 (2003),
  pp.~681--721.

\bibitem{Goldberg55}
{\sc S.~I. Goldberg}, {\em On the {E}uler characteristic of a {L}ie algebra},
  Amer. Math. Monthly, 62 (1955), pp.~239--240.

\bibitem{GreubHVBookII}
{\sc W.~Greub, S.~Halperin, and R.~Vanstone}, {\em Connections, curvature, and
  cohomology. {V}ol. {II}: {L}ie groups, principal bundles, and characteristic
  classes}, Academic Press [A subsidiary of Harcourt Brace Jovanovich,
  Publishers], New York-London, 1973.
\newblock Pure and Applied Mathematics, Vol. 47-II.

\bibitem{HatcherBook}
{\sc A.~Hatcher}, {\em Algebraic topology}, Cambridge University Press,
  Cambridge, 2002.

\bibitem{Hopf41}
{\sc H.~Hopf}, {\em \"{U}ber die {T}opologie der {G}ruppen-{M}annigfaltigkeiten
  und ihre {V}erallgemeinerungen}, Ann. of Math. (2), 42 (1941), pp.~22--52.

\bibitem{ItskovKV98}
{\sc V.~Itskov, M.~Karasev, and Y.~Vorobjev}, {\em Infinitesimal {P}oisson
  cohomology}, in Coherent transform, quantization, and {P}oisson geometry,
  vol.~187 of Amer. Math. Soc. Transl. Ser. 2, Amer. Math. Soc., Providence,
  RI, 1998, pp.~327--360.

\bibitem{Korman14}
{\sc E.~O. Korman}, {\em Elliptic Involutive Structures and Generalized Higgs
  Algebroids}, PhD thesis, University of Pennsylvania, 2014.
\newblock Available at
  \url{https://web.ma.utexas.edu/users/ekorman/files/dissertation.pdf}.

\bibitem{Kubarski02}
{\sc J.~Kubarski}, {\em Poincar\'{e} duality for transitive unimodular
  invariantly oriented {L}ie algebroids}, Topology Appl., 121 (2002),
  pp.~333--355.

\bibitem{Krizka10}
{\sc L.~K\v{r}i\v{z}ka}, {\em Moduli spaces of flat lie algebroid connections}.
\newblock Preprint, \url{https://arxiv.org/abs/1012.3180}, 2010.

\bibitem{Mackenzie92}
{\sc K.~C.~H. Mackenzie}, {\em Double {L}ie algebroids and second-order
  geometry. {I}}, Adv. Math., 94 (1992), pp.~180--239.

\bibitem{MackenzieBook}
\leavevmode\vrule height 2pt depth -1.6pt width 23pt, {\em General theory of
  {L}ie groupoids and {L}ie algebroids}, vol.~213 of London Mathematical
  Society Lecture Note Series, Cambridge University Press, Cambridge, 2005.

\bibitem{MayPBookConcise}
{\sc J.~P. May and K.~Ponto}, {\em More concise algebraic topology}, Chicago
  Lectures in Mathematics, University of Chicago Press, Chicago, IL, 2012.
\newblock Localization, completion, and model categories.

\bibitem{PflaumPT14}
{\sc M.~J. Pflaum, H.~Posthuma, and X.~Tang}, {\em The index of geometric
  operators on {L}ie groupoids}, Indag. Math. (N.S.), 25 (2014),
  pp.~1135--1153.

\bibitem{PflaumPT15}
\leavevmode\vrule height 2pt depth -1.6pt width 23pt, {\em The localized
  longitudinal index theorem for {L}ie groupoids and the van {E}st map}, Adv.
  Math., 270 (2015), pp.~223--262.

\bibitem{Segal68}
{\sc G.~Segal}, {\em Equivariant {$K$}-theory}, Inst. Hautes \'{E}tudes Sci.
  Publ. Math.,  (1968), pp.~129--151.

\bibitem{Serre51}
{\sc J.-P. Serre}, {\em Homologie singuli\`ere des espaces fibr\'{e}s.
  {A}pplications}, Ann. of Math. (2), 54 (1951), pp.~425--505.

\bibitem{TangYZ13}
{\sc X.~Tang, Y.-J. Yao, and W.~Zhang}, {\em Hopf cyclic cohomology and {H}odge
  theory for proper actions}, J. Noncommut. Geom., 7 (2013), pp.~885--905.

\bibitem{Tarkhanov86}
{\sc N.~N. Tarkhanov}, {\em Alexander duality for elliptic complexes}, Mat. Sb.
  (N.S.), 130(172) (1986), pp.~62--85, 128.

\bibitem{TarkhanovBook}
{\sc N.~N. Tarkhanov}, {\em Complexes of differential operators}, vol.~340 of
  Mathematics and its Applications, Kluwer Academic Publishers Group,
  Dordrecht, 1995.
\newblock Translated from the 1990 Russian original by P. M. Gauthier and
  revised by the author.

\bibitem{Zusmanovich11}
{\sc P.~Zusmanovich}, {\em How {E}uler would compute the {E}uler-{P}oincar\'{e}
  characteristic of a {L}ie superalgebra}, Expo. Math., 29 (2011),
  pp.~345--360.

\end{thebibliography}
\bibliographystyle{siam}

\vspace{0.5cm}
\noindent James Waldron, School of Mathematics, Statistics and Physics, Newcastle University, Newcastle upon Tyne NE1 7RU, UK.
\newline
\noindent Email address: james.waldron@newcastle.ac.uk

\end{document}